\documentclass{article}

\usepackage{amsmath}
\usepackage{amssymb}
\usepackage{mathtools}
\usepackage{amsthm}
\usepackage{graphicx}
\usepackage{amscd}
\usepackage{epic, eepic}
\usepackage{url}
\usepackage{color}
\usepackage[utf8]{inputenc} 
\usepackage{comment}
\usepackage{enumerate}   
\usepackage{todonotes}
\usepackage{graphicx}
\usepackage{epstopdf}
\usepackage{enumitem}
\usepackage{tikz}
\usepackage[top=25mm, bottom=25mm, left=28mm, right=28mm]{geometry}
\usepackage[bookmarks=false,hidelinks]{hyperref}

\makeatletter
\renewenvironment{proof}[1][\proofname] {\par\pushQED{\qed}\normalfont\topsep6\p@\@plus6\p@\relax\trivlist\item[\hskip\labelsep\bfseries#1\@addpunct{.}]\ignorespaces}{\popQED\endtrivlist\@endpefalse}
\makeatother

\newtheorem{theorem}{\bf Theorem}[section]
\newtheorem{lemma}[theorem]{\bf Lemma}
\newtheorem{claim}[theorem]{\bf Claim}
\newtheorem{corollary}[theorem]{\bf Corollary}
\newtheorem{proposition}[theorem]{\bf Proposition}

\newenvironment{custhm}[1]{\trivlist\item[\hskip\labelsep\textbf{Theorem~#1.}]\em}{\endtrivlist}

\theoremstyle{definition}

\newtheorem*{remark*}{\bf Remark}

\newcommand{\eps}{\varepsilon}

\DeclareMathOperator{\trace}{tr}
\DeclareMathOperator{\disc}{disc}

\DeclareMathOperator{\surp}{sp}

\title{Tight bounds for positive discrepancy via eigenvalues}
\author{Oliver Janzer\thanks{Institute of Mathematics, EPFL, Lausanne, Switzerland. Email: \textbf{oliver.janzer@epfl.ch}} \and Istv\'an Tomon\thanks{Department of Math. and Math. Stat., Ume\r{a} University, Ume\r{a}, Sweden. Email: \textbf{istvan.tomon@umu.se}.\\ Supported by the Swedish Research Council  grant VR 2023-03375.} \and Fredy Yip\thanks{Trinity College, University of Cambridge, United Kingdom. Email: \textbf{fy276@cam.ac.uk}.}}
\date{}

\begin{document}

\maketitle

\begin{abstract}
    Given an $n\times n$ symmetric matrix $M$ with largest eigenvalue $\lambda_1\geq 0$, it is easy to show that the solution of the optimisation problem $\max_{v\in [-1,1]^n}v^TMv$ is at most $\lambda_1 n$.  We prove the following converse: if every $n'\times n'$ principal submatrix of $M$  has maximal eigenvalue at least $\lambda$, then $\max_{v\in [-1,1]^n}v^TMv\geq \lambda(n-n'+1)$.  We use this lemma to improve a number of recent results on the MaxCut, bisection width, and discrepancy of graphs. Among others, we prove that every $n$-vertex $m$-edge graph that is far from a disjoint union of cliques has a cut of size at least $m/2+n^{5/4-o(1)}$, which is sharp up to the $o(1)$-term. Moreover, we prove that every $d$-regular $n$-vertex graph has bisection width at most $dn/4-\Omega_{\eps}(d^{1/3}n)$ for $d\leq (1-\eps)n/2$, which is optimal for $d=\Omega(n)$. This confirms a conjecture of R\"aty, Sudakov and Tomon.
\end{abstract}

\section{Introduction}

Discrepancy measures the uniformity of mathematical objects, and its study has its roots in the classical theory of irregularities of distributions \cite{Beck1987}.
In the case of a graph, a fundamental question is how uniformly its edges can be distributed. Many central problems in combinatorics boil down to studying this question from various perspectives, such as pseudorandomness, expansion and regularity.

In 1988, Erd\H os, Goldberg, Pach, and Spencer \cite{erdos1988cutting} introduced the following notion of discrepancy, which is the starting point of our interests. Given a graph $G$ with $n$ vertices and edge density $p = e(G)/ \binom{n}{2}$, the \emph{discrepancy} of $G$ is defined as $$\disc(G) = \max_{U\subseteq V(G)} \left| e(G[U]) -p \binom{|U|}{2}\right|.$$
In other words, $\disc(G)$ measures how much the edge counts in the induced subgraphs of $G$ deviate from their expected number. Already in 1972, Erd\H os and Spencer~\cite{erdos1972imbalances} considered the $p = 1/2$ subcase, and proved that the discrepancy is always at least $\Omega(n^{3/2})$, which is tight for a random graph of edge density $1/2$. Later, Erd\H os, Goldberg, Pach, and Spencer \cite{erdos1988cutting} extended this by showing that $\disc(G) = \Omega(\sqrt{pn^3})$ as long as $1/n\leq p \leq 1/2$. This is again tight by virtue of the corresponding random graph. As the discrepancy of a graph is the same as the discrepancy of its complement, this implies a lower bound for all values of $p\in [1/n,1-1/n]$, which is also tight.

This result guarantees that in every graph there is an induced subgraph with either considerably more edges than expected, or one with considerably fewer edges. It is natural to wonder to what extent do both outcomes hold separately. The \emph{positive discrepancy} of a graph is defined as
$$\disc^+(G) = \max_{U\subseteq V(G)} e(G[U]) -p \binom{|U|}{2},$$
and analogously, the \emph{negative discrepancy} is defined as
$$\disc^-(G) = \max_{U\subseteq V(G)} p \binom{|U|}{2} - e(G[U]).$$
Observe that $\disc(G)=\max(\disc^+(G),\disc^-(G))$. Positive and negative discrepancy have been studied for decades, and they are  closely related to other important graph parameters such as MaxCut and bisection width.

\subsection{Positive discrepancy and bisection width}

If $G$ is the Tur\'an graph $T_r(n)$, that is, the complete balanced $r$-partite graph on $n$ vertices, then the positive discrepancy of $G$ is only $\Theta(n)$. This is much smaller than the $\Omega(n^{3/2}/r^{1/2})$ bound guaranteed for the discrepancy of graphs of the same density. As Tur\'an graphs have edge density at least $1/2$, it is natural to wonder whether graphs of edge density less than $1/2$ have large positive discrepancy. Verstra\"ete \cite[Conjecture A]{Verstraete} conjectured that if $1/n<p<1/2$, then the positive discrepancy achieves the same bound as the discrepancy, that is, $\disc^{+}(G)=\Omega(p^{1/2}n^{3/2})$. An important result in this direction was proved by Alon \cite{alon1997edge} in 1997: if $G$  is a  $d$-regular $n$-vertex graph with $d=O(n^{1/9})$, then the \emph{bisection width} of $G$ is at most $dn/4-\Omega(d^{1/2}n)$. Here, the bisection width of a graph is the minimal number of edges crossing a balanced bipartition of the graph. As observed in \cite{raty2026positive}, for $d$-regular graphs, this is equivalent to the statement that $\disc^+(G)=\Omega(n\sqrt{d})$. Thus, the result of Alon confirms the conjecture of Verstra\"ete for $p\ll n^{-8/9}$ in the case of regular graphs, by noting that $p=d/(n-1)$. A closely related result of Bollob\'as and Scott \cite{bollobas2006discrepancy} proved that for any $n$-vertex graph $G$ with average degree $1\leq d\leq n/2$, we have $\disc^+(G)\disc^-(G)=\Omega (n^2d)$. Observe that this, in particular, implies the result of Erd\H os, Goldberg, Pach and Spencer stating that $\disc(G)=\Omega(n\sqrt{d})$, and  it also implies that $\disc^+(G)=\Omega(n)$.

A few years ago, R\"aty, Sudakov and Tomon \cite{raty2026positive} showed that the  minimum of $\disc^+(G)$ among $n$-vertex graphs of average degree $d$ exhibits a strange dependence on $d$ and $n$, in particular observing that the conjecture of Verstra\"ete \cite{Verstraete} is not true. However, they showed that for $p<1/2$, one can indeed achieve substantial improvement over the general bound $\Omega(n)$. Their result is summarised in the following theorem.

\begin{theorem}[R\"aty--Sudakov--Tomon \cite{raty2026positive}] \label{thm:three regimes}
    Let $G$ be an $n$-vertex graph with average degree $d$ and let $\eps>0$. Then
    $$
\disc^+(G)=
\begin{cases}
\Omega(nd^{1/2}) & \text{ if } 1\leq d\leq n^{2/3}\\
\Omega(n^2/d) & \text{ if } n^{2/3}\leq d\leq n^{4/5}\\
\Omega_{\eps}(nd^{1/4}/\log n) & \text{ if } n^{4/5}\leq d\leq (\frac{1}{2}-\eps)n.
\end{cases}
$$
These bounds are tight for $1\leq d\leq n^{3/4}$.
\end{theorem}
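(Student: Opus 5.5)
The plan is to reduce $\disc^+(G)$ to a quadratic optimisation over the cube and then bound the optimum from below spectrally, splitting into the three degree regimes. Let $A$ be the adjacency matrix of $G$, $p=d/(n-1)$, and $M=A-p(J-I)$. Then $1_U^TM1_U=2\bigl(e(G[U])-p\binom{|U|}{2}\bigr)$ and $1^TM1=0$.

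\textbf{Step 1 (rounding).} Given $v\in[-1,1]^n$, choose $U$ randomly, including each vertex $i$ independently with probability $(1+v_i)/2$. Since $M$ has zero diagonal,
$$\mathbb{E}[1_U^TM1_U]=\tfrac14\bigl(1^TM1+2\cdot 1^TMv+v^TMv\bigr).$$
Averaging this over $v$ and $-v$ cancels the cross term, and $1^TM1=0$. Hence $\disc^+(G)\geq \tfrac18\max_{v\in[-1,1]^n}v^TMv$. It therefore suffices to lower bound this maximum.

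\textbf{Step 2 (spectral input).} I would use the converse lemma from the abstract: if every $n'\times n'$ principal submatrix of $M$ has largest eigenvalue at least $\lambda$, then $\max_v v^TMv\geq \lambda(n-n'+1)$. I would take $n'=n/2$, so that it is enough to show that every induced subgraph $H=G[S]$ with $|S|\ge n/2$ has $\lambda_1(M_S)\gtrsim$ the target divided by $n$. To lower bound $\lambda_1(M_S)$, combine $\trace(M_S)=0$ with
$$\trace(M_S^2)\approx 2e(H)(1-2p)+p^2|S|^2\gtrsim |S|\,d_H.$$
If every eigenvalue of $M_S$ is at least $-\mu$, then the positive eigenvalues must carry a comparable mass. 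Using $\sum_{\lambda_i>0}\lambda_i=\sum_{\lambda_i<0}|\lambda_i|$ and the estimate $\sum_i\lambda_i^2\le \lambda_1\sum_{\lambda_i>0}\lambda_i+\mu\sum_{\lambda_i<0}|\lambda_i|$, one gets $\lambda_1\gtrsim \trace(M_S^2)/(n\mu)$ unless $\lambda_1\gtrsim\mu$.

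\textbf{Step 3 (regimes).} I would handle the three regimes as follows.
\begin{itemize}
\item For $d\le n^{2/3}$, aim for $\lambda_1(M_S)\gtrsim\sqrt d$. Either $M_S$ has an eigenvalue below $-C\sqrt d$ or it does not. In the latter case Step 2 gives $\lambda_1\gtrsim nd/(n\sqrt d)=\sqrt d$. In the former case, a very negative eigenvalue of $A_S-pJ$ forces a large, almost bipartite dense structure. I would count closed walks, comparing $\trace(M_S^3)$ (essentially six times the triangle count minus terms of order $p^3n^3$) with $\trace(M_S^2)$, to show that the negative mass can only be balanced if positive eigenvalues of size $\gtrsim\sqrt d$ also exist, as long as $p^3n^3\lesssim n d^{3/2}$. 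The condition $p^3n^3\lesssim nd^{3/2}$ is exactly $d\le n^{2/3}$.
\item For $n^{2/3}\le d\le n^{4/5}$, the error term $p^3n^3=d^3$ now dominates. I would run the same cubic-trace comparison and extract $\lambda_1\gtrsim n/d$, giving $\disc^+(G)=\Omega(n^2/d)$.
\item For $d\le(\frac12-\eps)n$, the slack $1-2p\ge 2\eps$ is what prevents the Tur\'an-type cancellation. Here I would use a fourth-moment comparison, $\trace(M_S^4)$ against $\trace(M_S^2)^2$, and accept a dyadic pigeonhole over eigenvalue scales, which costs the $\log n$, to obtain $\lambda_1\gtrsim d^{1/4}/\log n$.
\end{itemize}

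\textbf{Main obstacle.} The crux is the case of Step 3 where $M_S$ has very negative eigenvalues, that is, $G$ locally resembles a complete multipartite graph. The trace inequalities must be arranged so that such structure is impossible when $p<1/2-\eps$. I expect this balancing, and its exponent bookkeeping in the top regime, to be the hardest part.

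Tightness for $d\le n^{3/4}$ would come from constructions: random graphs in the first regime, and blow-ups or disjoint unions of Tur\'an-like pieces in the second. I would verify these separately by upper bounding $\max_v v^TMv\le \lambda_1 n$ for them.
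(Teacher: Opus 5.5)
The paper quotes this theorem from \cite{raty2026positive} and gives no proof of it, so I can only judge your plan on its own terms. Step~1 is correct; it is Lemma~\ref{lem:pdisc from vector}. Step~2 fails as stated, and the third regime and the tightness claim are not established.

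\textbf{Step 2 and the first two regimes.} With $n'=n/2$, the hypothesis of Theorem~\ref{thm:matrix bridge} is false for many graphs. If $G$ is $d$-regular bipartite and $S$ is one side, then $M_S=-p(J-I)$, whose top eigenvalue is $p<1$. Your bound $\trace(M_S^2)\gtrsim |S|d_H$ is in terms of $d_H$, which you never relate to $d$. For irregular graphs even $n'=(1-o(1))n$ fails: take a clique on $\sqrt{dn}$ vertices plus isolated vertices, and let $S$ be the isolated vertices.

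You need the regularisation used in the proof of Theorem~\ref{thm:positive disc}. By Lemma~\ref{lem:if weight on high degree}, either the edges meeting vertices of degree at least $2d$ already force $\disc^+(G)=\Omega(nd)$, or you delete those vertices. Lemma~\ref{lem:pdisc change} shows you lose little by doing so, and you obtain $\Delta\le Cd$. Then apply the bridge with $n'=(1-c)n$ and $c\ll 1/C$, so that every $G[S]$ keeps half the edges and has maximum degree $O(d)$.

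After this fix, your cubic-trace idea does give the first two regimes, and no case split on $\lambda_{\min}$ is needed. Suppose $\lambda_1(M_S)\le\theta\le c\sqrt d$.
\begin{itemize}
\item Since $\trace(M_S)=0$, we get $\sum_{\lambda_i<0}|\lambda_i|\le n\theta$.
\item The second moment gives $\sum_{\lambda_i<0}\lambda_i^2\gtrsim nd$.
\item Cauchy--Schwarz then gives $\trace(M_S^3)\le n\theta^3-\Omega(nd^2/\theta)=-\Omega(nd^2/\theta)$.
\item Expanding over triples gives $\trace(M_S^3)\ge -6p\sum_v\binom{d_H(v)}{2}-p^3|S|^3=-O(d^3)$.
\end{itemize}
Comparing the two bounds yields $\lambda_1(M_S)\gtrsim\min(\sqrt d,n/d)$. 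Note the cherry term $p\sum_v d_H(v)^2$ in the last bound: your description omits it, and it is $O(d^3)$ only because of the degree bound.

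\textbf{The third regime.} This is not proved. You do not say which upper bound on $\trace(M_S^4)$ you would use, or how $1-2p\ge 2\eps$ enters, and that is the whole difficulty. The paper does supply a non-circular route, since it invokes Theorem~\ref{thm:three regimes} only for $d\le n^{3/4}$. In Lemma~\ref{prop:near regular eigenvalue for pdisc}, $X=\mu_1I+qJ-A\succeq 0$ implies $X\circ X=(1-2q)A+q^2J+(\mu_1^2+2\mu_1q)I\succeq 0$. The coefficient $1-2q$ is where the density bound is used. Combined with $\omega_1|\omega_{n-1}|\gtrsim d$ from the second moment, this gives $\lambda_1\gtrsim d^{1/3}$ on every large induced subgraph. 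The bridge then yields $\Omega_\eps(nd^{1/3})$, with no $\log n$ loss at all.

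\textbf{Tightness.} Your constructions for the middle range do not work. A disjoint union of two or more $d$-regular pieces has $\disc^+=\Omega(nd)$: take half the pieces. This is far above $n^2/d$ once $d\ge\sqrt n$. What you need is a regular graph with $\lambda_2=O(n/d)$, which gives $\disc^+\le\frac12 n\lambda_2+d$. For example, at $d\approx n^{3/4}$ the collinearity graph of a generalized quadrangle $GQ(q,q^2)$ has $\lambda_2=q-1\approx n/d$.
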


The tightness of their result remained open in the range $n^{3/4}\leq d\leq  (1/2-\eps)n $. From above, the best known construction of Davis, Huczynska, Johnson, and Polhill \cite{davis2024denniston} provides graphs of constant edge density strictly less than $1/2$ with positive discrepancy $O(n^{4/3})$. Motivated by this construction and their lower bounds for the second largest eigenvalue of regular graphs (to be discussed in more detail in the next subsection), R\"aty, Sudakov and Tomon \cite{raty2026positive} conjectured that $\disc^+(G)=\Omega_{\eps}(nd^{1/3})$. We prove this conjecture.

\begin{theorem} \label{thm:positive disc}
    Let $\eps>0$ and let $G$ be an $n$-vertex graph of average degree $d$, where $1\leq d\leq (1/2-\eps)n$. Then $$\disc^+(G)=\Omega_{\eps}(nd^{1/3}).$$
    Moreover, if $G$ is $d$-regular, then the bisection width of $G$ is at most 
    $$\frac{dn}{4}-\Omega_{\eps}(nd^{1/3}).$$
    
\end{theorem}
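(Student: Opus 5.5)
Via the abstract's eigenvalue lemma, positive discrepancy reduces to bounding the largest eigenvalue of every large principal submatrix of $M=A-pJ$ (up to the diagonal). Throughout, $A$ is the adjacency matrix of $G$, $J$ is the all-ones matrix, and $p=d/(n-1)$.

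\textbf{Translating $v^TMv$ into discrepancy.} For $v\in[-1,1]^n$ the form $v^TMv$ is multilinear off the diagonal. Hence there is a vector in $\{-1,1\}^n$ achieving at least the same value up to an $O(pn)$ diagonal error. A $\pm1$ vector $v=\mathbf 1_U-\mathbf 1_{V\setminus U}$ gives
$$v^TMv = 2\big(e(U)-p\tbinom{|U|}{2}\big)+2\big(e(V\setminus U)-p\tbinom{n-|U|}{2}\big)-2\big(e(U,V\setminus U)-p|U|(n-|U|)\big).$$
Combined with $e(G)=p\binom n2$, this equals $4\big(e(U)-p\binom{|U|}2+e(U^c)-p\binom{|U^c|}2\big)$, up to lower-order terms. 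Therefore $\disc^+(G)\ge \tfrac18\max_v v^TMv - O(n)$. For $d$-regular graphs and balanced bipartitions the same identity gives bisection width $dn/4-\Omega(\max v^TMv)$. To land on a balanced bipartition we can restrict to $v\perp\mathbf 1$, or correct the imbalance by moving vertices, which costs little. It therefore suffices to find $v$ with $v^TMv=\Omega_\eps(nd^{1/3})$.

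\textbf{Spectral input.} By the lemma with $n'=n/2$, say, it is enough to show that every principal submatrix $M[S]$ with $|S|=n/2$ has $\lambda_1(M[S])=\Omega_\eps(d^{1/3})$. Now $M[S]=A[S]-pJ_S$, where $A[S]$ is the adjacency matrix of the induced graph $H=G[S]$, and $pJ_S$ has rank one. By interlacing (Weyl), $\lambda_1(M[S])\ge\lambda_2(A[S])$. So we need: every induced subgraph $H$ on $n/2$ vertices has $\lambda_2(H)=\Omega(d^{1/3})$, or else $\lambda_1(M[S])$ is large for another reason. The alternative reason is that $H$ is much denser than $p$; then $\mathbf 1_S$ itself gives $\mathbf 1^T M[S]\mathbf 1=2(e(H)-p\binom{|S|}2)$, which is already large. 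Thus we may assume $H$ has density at most about $p\le 1/2-\eps$ and average degree $d_H\lesssim d$. We may also assume $d_H\gtrsim d$, since otherwise the complement $S^c$ is dense and we argue there.

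\textbf{Main obstacle.} The key step is a second-eigenvalue lower bound: an $m$-vertex graph $H$ with density in $[\Omega(p),1/2-\eps]$ has $\lambda_2(H)=\Omega_\eps(d_H^{1/3})$. This is the bound that R\"aty, Sudakov and Tomon established for regular graphs, and it is the source of the $d^{1/3}$.

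I would prove it by trace counting, in the style of Alon–Boppana arguments adapted to dense graphs. Suppose every eigenvalue of $H$ other than $\lambda_1$ lies in $[\lambda_{\min},\lambda_2]$ with $\lambda_2$ small. The identities
$$\sum_i\lambda_i=0,\qquad \sum_i\lambda_i^2=2e(H),$$
together with the bounds $|\lambda_{\min}|$ large and $\lambda_1\approx d_H\le(1/2-\eps)m$, force many large negative eigenvalues. I would then compare with the third-moment identity $\sum_i\lambda_i^3=6\,\#\text{triangles}\ge 0$. Balancing these moments yields the contradiction, in the regime where the regular-graph proof needs the density to be bounded away from $1/2$.

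If $H$ is irregular, $\lambda_1(H)$ exceeds the average degree. I would handle this by applying the moment argument to $A[S]-pJ_S$ directly rather than to $A[S]$, so that only the average degree enters.

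Finally I plug the eigenvalue bound into the lemma, which gives $\max_v v^TMv\ge \Omega_\eps(d^{1/3})(n/2+1)=\Omega_\eps(nd^{1/3})$. Both conclusions then follow from the first step.
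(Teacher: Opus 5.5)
Your proposal has a genuine gap, in two linked places. Both stem from the missing degree-regularisation step.

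\textbf{The choice $n'=n/2$.} Theorem~\ref{thm:matrix bridge} needs the eigenvalue bound for \emph{every} $S$ with $|S|\ge n'$. With $n'=n/2$ this fails for graphs well inside the theorem's scope. Take a random bipartite graph with parts $X,Y$ of size $n/2$ and cross density $1/2$. For $S=X$, the graph $G[S]$ is empty and $\lambda_1(M[S])\le p$.

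Your patch (``otherwise $S^c$ is dense and we argue there'') does not help. Here $G[Y]$ is empty too, and in general density elsewhere in $G$ cannot certify the hypothesis for $S$ itself. The hypothesis is only plausible when $n'=(1-c_\eps)n$ and deleting $c_\eps n$ vertices changes neither the average nor the maximum degree much. That forces $G$ to be nearly regular.

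The paper gets near-regularity from a step your plan lacks. Let $T$ be the set of vertices of degree at least $(1+\eps/40)d$. Lemma~\ref{lem:if weight on high degree} gives $\disc^+(G)\ge c_1(e(T)+e(T,T^c))-c_2n$. So either you are done, or $T$ meets only $O_\eps(nd^{1/3})$ edges. In the latter case Lemma~\ref{lem:pdisc change} transfers a bound for $G-T$ back to $G$. Only for this nearly regular graph does every induced subgraph on a $(1-\eps/100)$-fraction of the vertices keep average degree at least $(1-\eps/10)d$ and maximum degree at most $(1/2-\eps/2)$ times its order.

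\textbf{The eigenvalue input.} Your key claim, $\lambda_2(H)=\Omega_\eps(d_H^{1/3})$ for density at most $1/2-\eps$, is false for irregular $H$: $K_{m/5,4m/5}$ has density about $8/25$ but $\lambda_2=0$.

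The moment argument you sketch also cannot reach $d^{1/3}$ when $d=\Theta(n)$. Take $d=0.4n$ and the spectrum $\lambda_1=d$, four eigenvalues near $-0.245n$, about $0.58n$ eigenvalues equal to $1$, and zeros. Up to $O(n)$ adjustments it satisfies $\sum\lambda_i=0$, $\sum\lambda_i^2=nd$ and $\sum\lambda_i^3>0$, and $|\lambda_{\min}|$ is large. Yet the second eigenvalue is $1$.

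What is missing is an inequality bounding $|\lambda_{\min}|$ by the \emph{square} of the top eigenvalue; this is where $q<1/2$ enters. The paper gets it from the Hadamard square. If $\mu_1=\lambda_1(A-qJ)$, then $X=\mu_1I+qJ-A\succeq0$. Hence $X\circ X=(1-2q)A+q^2J+(\mu_1^2+2\mu_1q)I\succeq0$. With $P=I-\frac1nJ$, this means the least eigenvalue $\omega_{n-1}$ of $PAP$ on $\mathbf 1^\perp$ satisfies $|\omega_{n-1}|=O_\eps(\mu_1^2)$.

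Now sum $\omega^2\le(\omega_1+\omega_{n-1})\omega+\omega_1|\omega_{n-1}|$ over the spectrum of $PAP$. Using $\trace(PAP)=-d$ and $\trace((PAP)^2)\ge nd-2\Delta d+d^2$, this gives $\omega_1|\omega_{n-1}|=\Omega_\eps(d)$. Since $\omega_1\le\mu_1$ by interlacing, $\mu_1^3=\Omega_\eps(d)$. Note that the trace bound again requires $\Delta\le(1/2-\eps)n$, i.e.\ exactly the degree control from the first point.
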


A structural result about graphs of small discrepancy was also proposed by R\"aty, Sudakov and Tomon~\cite{raty2026positive}. They conjectured that being close to a Tur\'an graph is the only obstruction for large positive discrepancy. We say that $G$ is \emph{$\delta$-close} to a family of graphs $\mathcal{H}$ if the edit distance to some member of $\mathcal{H}$ is at most $\delta n^2$. Otherwise, we say that $G$ is \emph{$\delta$-far} from $\mathcal{H}$.  With this notation in our hand, \cite{raty2026positive} conjectured that for every $\delta>0$, if $G$ is $\delta$-far from every Tur\'an graph (including the empty graph), then $\disc^+(G) \geq \Omega_{\delta}(n^{5/4})$. Here, the exponent $5/4$ cannot be improved, due to a construction of de~Caen~\cite{decaen}. Recently, Jin, Milojevi\'c, Tomon and Zhang \cite{jin2025small} proved the weaker version of this conjecture that $\disc^+(G) \geq \Omega_{\delta}(n^{1+c})$ for some absolute constant $c>0$.

We point out that in the course of proving this theorem and its variants, they have developed several powerful methods that allowed them, among other results, to prove a polynomial bound for the celebrated Chowla cosine problem \cite{Chowla1965} (see \cite{bedert2025polynomial} for independent work on the same problem). A slightly more detailed discussion of their results will be given in the next subsection. We are now ready to state our second main result, which is an asymptotic resolution of the conjecture of R\"aty, Sudakov and Tomon \cite{raty2026positive}.

\begin{theorem} \label{thm:disc close to Turan}
    For every $\delta > 0$, if $n$ is sufficiently large in terms of $\delta$ and $G$ is an $n$-vertex graph which is $\delta$-far from every Tur\'an graph, then $\disc^+(G) \geq n^{5/4-o(1)}$.
\end{theorem}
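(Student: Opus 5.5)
We plan to combine the eigenvalue lemma from the abstract with a structural dichotomy. Write $A$ for the adjacency matrix of $G$, $J$ for the all-ones matrix, and $M=A-p(J-I)$. For a vector $v\in\{0,1\}^n$ with support $U$ we have $v^TMv=2(e(G[U])-p\binom{|U|}{2})$. For general $v\in[-1,1]^n$, a standard rounding/averaging argument converts $v^TMv$ into $\Omega(v^TMv)-O(n)$ of positive discrepancy. One first splits $v=v^+-v^-$. Then one rounds coordinates randomly. Finally one uses that the cross term $v^{+T}Mv^-$ can be handled by taking the better of $U^+$, $U^-$, and $U^+\cup U^-$. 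This is the same reduction used by Räty--Sudakov--Tomon and Jin--Milojević--Tomon--Zhang. So it suffices to find $v\in[-1,1]^n$ with $v^TMv\ge n^{5/4-o(1)}$. By the lemma this follows once we show that every principal submatrix $M[S]$ with $|S|=n'=n/2$ has $\lambda_1(M[S])\ge n^{1/4-o(1)}$. Then $\max_v v^TMv\ge \lambda(n-n'+1)=\Omega(n\lambda)$.

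The task therefore reduces to a spectral statement about induced subgraphs. For every $S$ of size $n/2$, the matrix $M[S]=A[S]-p(J-I)$ must have an eigenvalue at least $n^{1/4-o(1)}$. We argue by contradiction. Suppose $\lambda_1(M[S])\le\lambda:=n^{1/4-o(1)}$ for some $S$. Then $M[S]$ is nearly negative semidefinite. Since $pJ$ has rank one, $A[S]$ has at most one eigenvalue above $\lambda+p$, and $A[S]-pJ+pI\preceq \lambda I$.

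The key step is the statement that a graph $H=G[S]$ on $m=n/2$ vertices with $\lambda_1(A_H-pJ)\le\lambda$ must be $o(1)$-close to a Turán graph, or to the empty or complete graph. We take this structural input from the Jin--Milojević--Tomon--Zhang framework, which states that small positive discrepancy forces closeness to Turán graphs. Quantitatively, we use the trace method. Let $B=pJ-A_H$, so $B\succeq -\lambda I$. Then $\trace(B^2)=\Theta(pm^2)$, while the negative part of the spectrum of $B$ has size at most $\lambda$. Comparing $\trace(B^2)$, $\trace(B^3)$ and $\trace(B^4)$, we get a bound on $\sum_i \mu_i^4$ over the large positive eigenvalues $\mu_i$. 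This bound shows that $B+\lambda I$ is close in Frobenius norm to a matrix of bounded rank. A graph whose matrix $pJ-A$ is approximately PSD of bounded rank is close to a complete multipartite graph. This holds because the PSD Gram structure forces the vertex vectors to cluster, with nonadjacent pairs having nearly parallel vectors. The rank/threshold calculation $\lambda\cdot m\gtrsim (\text{error})$ is exactly where the exponent $1/4$ arises. Balancing the $\ell^4$ tail against $\lambda^3 m$ gives $\lambda\approx m^{1/4}$, matching de Caen's construction.

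Finally, we must pass from "every half-size induced subgraph with small top eigenvalue is close to Turán" to "$G$ is close to a Turán graph". If the bound failed for some $S$, then $G[S]$ is close to a Turán graph $T_r(|S|)$ with density about $p$. We can apply this to many overlapping sets $S$, for instance $S$ together with random other halves. Then consistency of the multipartite structures on overlaps forces $G$ itself to be $\delta$-close to a Turán graph, which is a contradiction. Alternatively, since the lemma only needs the eigenvalue bound for all $n'$-subsets, we may instead choose $n'=(1-\eta)n$ with small $\eta$ depending on $\delta$. Then closeness of $G[S]$ to a Turán graph directly transfers to $G$, at a cost of $\eta n^2$ edits. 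This gives $\max v^TMv\ge \eta n\lambda$, still $n^{5/4-o(1)}$. We expect the main obstacle to be the middle step: proving with only $n^{o(1)}$ loss that $\lambda_1(A_H-pJ)\le m^{1/4-o(1)}$ implies closeness to a Turán graph. The first thing to try is the fourth-moment trace argument combined with an iterated regularity/clustering of the approximate low-rank Gram representation, taking care that the density-increment iterations cost only $n^{o(1)}$.
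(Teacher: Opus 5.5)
\textbf{There is a genuine gap in the middle step.} Your outer reduction is sound: Lemma~\ref{lem:pdisc from vector} plus Theorem~\ref{thm:matrix bridge} with $n'=(1-\eta)n$ is the right hereditary step. The $n'=n/2$ version with ``consistency on overlaps'' is not, since failure of the eigenvalue bound for one $S$ tells you nothing about other sets.

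The middle step carries all the difficulty. You need $\lambda_1(A_{G[S]}-p(J-I))\ge n^{1/4-o(1)}$ for every large $S$, with $p$ the density of $G$, and nothing you cite gives this. The claim that Jin--Milojevi\'c--Tomon--Zhang show ``small positive discrepancy forces closeness to Tur\'an'' is, at exponent $5/4$, the theorem being proved; they only get $n^{1+c}$. So invoking it is circular. Their Tur\'an-type eigenvalue result (Theorem~\ref{thm:lambda_2 close to Turan}) concerns $\lambda_2$ of \emph{regular} graphs. It applies neither to the non-regular graphs $G[S]$ nor to a shift by $p$ different from the density of $G[S]$.

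The trace sketch does not fill this in. Knowing that $B+\lambda I\succeq 0$ has trace $(p+\lambda)m$ and squared Frobenius norm $\Theta(m^2)$ only makes it Frobenius-close to rank of order $\lambda^2\approx m^{1/2}$, not bounded rank. Extracting a clustering from such a Gram representation with $n^{o(1)}$ loss is the substantial content of the JMTZ eigenvalue theorems, not a moment computation.

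Worse, your key claim is false as stated. Let $H$ be complete $r$-partite with part sizes $a_i$, and let $p\ge 1-1/r$. Then $A_H-pJ$ vanishes off the span of the part indicators. On that span it equals $(1-p)uu^T-\mathrm{diag}(a_i)$ with $u_i=\sqrt{a_i}$, which is negative semidefinite because $(1-p)\sum_i u_i^2/a_i=(1-p)r\le 1$. So $H=K_{m/6,m/3,m/2}$ with $p=2/3$ has $\lambda_1(A_H-pJ)=0$, yet it is far from every Tur\'an graph. With $|S|=n/2$ the density of $G[S]$ can indeed be far from $p$. The spectral condition detects multipartite structure, not balance.

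\textbf{The missing idea} is to eliminate $p$ by passing to the complement and the least eigenvalue, and to handle balance separately. The paper argues contrapositively.
\begin{itemize}
\item If $\disc^+(G)\le n^{1+\gamma}$, Lemma~\ref{lem:sp to disc} gives $\surp(\overline{G})\le n^{1+\gamma'}$.
\item Theorem~\ref{thm:eigenvalue to surplus} (the bridge applied to $-A_{\overline{G}}$, with no $p$ involved) is combined with Theorem~\ref{thm:lambda_n close to Turan}. The latter holds for \emph{all} graphs, hence for every induced subgraph on $(1-\delta/2)n$ vertices. Together they force $\overline{G}$ to be close to a disjoint union of cliques, so $G$ is close to some complete multipartite $H$. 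This matches the example above.
\item Lemma~\ref{lem:pdisc change} gives $\disc^+(H)\le \disc^+(G)+\eps n^2/2$.
\item The elementary Lemma~\ref{lem:from complete multipartite to Turan} then shows that a complete multipartite graph with $\disc^+(H)\le\eps n^2$ is close to Tur\'an. If the parts of size at most $\eps^{1/3}n$ cover many vertices, their union is a nearly complete set of density well above $p<1-\delta$. Otherwise, an equal-part complete $r$-partite subgraph forces density at least $1-1/r-\eps^{1/4}$, and this forces near-balance.
\end{itemize}
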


\subsection{Maximum cut}
 
 A cut in a graph is a bipartition of its vertex set, and the size of a cut is the number of edges going across the partition. Let us write $\textrm{mc}(G)$ for the size of the largest cut in $G$. A simple probabilistic argument shows that $\textrm{mc}(G)\geq e(G)/2$ holds for every graph $G$, so a more convenient parameter to study is the \emph{surplus} of the graph, defined as $\surp(G)=\textrm{mc}(G)-e(G)/2$. MaxCut and surplus are central notions in both Combinatorics and Theoretical Computer Science; see \cite{edwards1973some,erdHos1975problems,goemans1995improved}.

As observed by R\"aty, Sudakov and Tomon \cite{raty2026positive}, for every regular graph $G$, $\disc^+(G)$ and $\surp(\overline{G})$ are within a constant factor of each other, where $\overline{G}$ denotes the complement of $G$. Moreover, even without the regularity assumption, $\surp(\overline{G})=O(\disc^+(G))$ (see Lemma~\ref{lem:sp to disc} for the precise statement). Given this connection, it is not surprising that we can establish a result similar to Theorem~\ref{thm:disc close to Turan} for surplus as well, addressing another conjecture of \cite{raty2026positive}, and improving a result of \cite{jin2025small}.

\begin{theorem} \label{thm:sp close to Turan}
    For every $\delta > 0$, if $n$ is sufficiently large in terms of $\delta$ and $G$ is an $n$-vertex graph which is $\delta$-far from every disjoint union of cliques, then $\surp(G) \geq n^{5/4-o(1)}$.
\end{theorem}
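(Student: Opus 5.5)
We plan to prove Theorem~\ref{thm:sp close to Turan} via the spectral lemma from the abstract: if every $n'\times n'$ principal submatrix of a symmetric $M$ has largest eigenvalue at least $\lambda$, then $\max_{v\in[-1,1]^n} v^TMv\ge \lambda(n-n'+1)$. We apply it to $M=-A$, where $A$ is the adjacency matrix of $G$. For $v\in[-1,1]^n$ we have $-v^TAv=-2\sum_{ij\in E}v_iv_j$. Rounding $v$ randomly, with $x_i=\pm1$ and $\mathbb{E}x_i=v_i$ independently, turns this into a cut, and the standard identity gives
\[\surp(G)\ \ge\ \tfrac14\max_{v\in[-1,1]^n}(-v^TAv).\]
So it suffices to show that every induced subgraph $G[U]$ with $|U|=n'$ satisfies $\lambda_{\min}(A[U])\le -\lambda$, with $\lambda(n-n'+1)\ge n^{5/4-o(1)}$. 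We will take $n'=(1-\delta/10)n$, so that $n-n'+1=\Omega(\delta n)$, and then need $\lambda_{\min}(G[U])\le -n^{1/4-o(1)}$ for every such $U$.

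The first reduction is that being $\delta$-far from a disjoint union of cliques is inherited. Deleting $\delta n/10$ vertices changes the edit distance to the family by at most $\delta n^2/10$. Hence every $G[U]$ with $|U|=n'$ is $(\delta/2)$-far from disjoint unions of cliques. The task therefore becomes a spectral statement: if an $N$-vertex graph $H$ is $\delta'$-far from every disjoint union of cliques, then $\lambda_{\min}(H)\le -N^{1/4-o(1)}$.

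For this spectral statement we argue by contradiction, assuming $\lambda_{\min}(H)\ge -\mu$ with $\mu=N^{1/4-o(1)}$. Graphs with smallest eigenvalue bounded below have strong structure. Since $A+\mu I\succeq 0$, $H$ is the Gram graph of vectors in low effective dimension. Moreover, $H$ contains no induced $K_{1,t}$ for $t>\mu^2$, because the star's smallest eigenvalue is $-\sqrt t$, and interlacing passes this to induced subgraphs. Also $\sum_i\lambda_i^2=2e(H)$ and $\sum_i\lambda_i=0$. The negative eigenvalues contribute at most $N\mu^2$ to $\sum\lambda_i^2$, so most of the energy sits in positive eigenvalues, and the positive ones sum to at most $N\mu$. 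Together these force the positive spectrum to be concentrated on a few large eigenvalues, roughly $\lambda_i\gtrsim e(H)/(N\mu)$. We then want to show that the projection onto these top eigenvectors approximates $A$ up to $o(N^2)$ error in cut norm, and that a PSD rank-few $\{0,1\}$-approximation with small negative part must be close to a block-diagonal all-ones matrix, i.e.\ a union of cliques. Alternatively, we may invoke the structural result of Jin--Milojevi\'c--Tomon--Zhang, whose methods give closeness to a union of cliques once $\lambda_{\min}$ is only polynomially small. The threshold $N^{1/4}$ is the natural one: de Caen's construction shows the $5/4$ exponent is tight, which suggests that the sharp form of this statement has exactly this dependence.

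The main obstacle is the sharp spectral step: obtaining the exponent $1/4$ rather than some $c>0$. What we would try first is a counting argument. Closeness to unions of cliques fails iff there are $\Omega(N^3)$ induced cherries (paths $P_3$). Each induced cherry is an induced subgraph with smallest eigenvalue $-\sqrt2$. We would then run a moment or trace argument on $A+\mu I$ restricted to the neighbourhoods of the cherries, using the $K_{1,t}$-freeness with $t\approx\mu^2$ and a dependent random choice to locate a large induced star or a large induced complete bipartite-minus-matching structure. That structure would have smallest eigenvalue about $-N^{1/4}$, balancing $t^2\approx N$ against the density of cherries. The $o(1)$ losses would come from the dependent random choice and regularity-type cleaning.
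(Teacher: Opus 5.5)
Your reduction is the paper's own. You apply Theorem~\ref{thm:matrix bridge} to $M=-A$; your random rounding is an equivalent way to get $\surp(G)\ge\frac14\max_{v\in[-1,1]^n}(-v^TAv)$, which the paper gets from multilinearity. You take $n'=(1-\Theta(\delta))n$, note that every induced subgraph on $n'$ vertices stays $(\delta/2)$-far from disjoint unions of cliques, and reduce to showing that each such subgraph has $|\lambda_{\min}|\ge n^{1/4-o(1)}$.

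\textbf{The gap.} You never secure this last spectral statement. You call the exponent $1/4$ ``the main obstacle'' and offer two routes, and neither closes it as written. Your own sketch is not a proof. Its key claims carry the entire difficulty and nothing in the sketch justifies them: that the top eigenspace approximates $A$ within $o(N^2)$ in cut norm, that any such approximant is near a block-diagonal all-ones matrix, and that cherry counting plus dependent random choice yields an induced structure with least eigenvalue about $-N^{1/4}$. In particular, nothing fixes the threshold at $1/4$ rather than some smaller power. Your alternative, citing Jin--Milojevi\'c--Tomon--Zhang as giving closeness ``once $\lambda_{\min}$ is only polynomially small'', is too vague. If it meant a threshold $N^{c}$ for some unspecified $c>0$, you would only get $\surp(G)\ge n^{1+c}$.

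\textbf{The fix.} The sharp eigenvalue statement is already a theorem, and the paper uses it as a black box. By \cite[Theorem~1.4]{jin2025small} (Theorem~\ref{thm:lambda_n close to Turan} above), for every $\gamma\in(0,1/4)$ and $\delta>0$, every sufficiently large $n$-vertex graph with $|\lambda_n|\le n^{\gamma}$ is $\delta$-close to a disjoint union of cliques.
\begin{itemize}
\item Fix $\gamma_0\in(\gamma,1/4)$ and apply this theorem to every induced subgraph on at least $(1-\delta/2)n$ vertices; each then has $|\lambda_{\min}|\ge (n/2)^{\gamma_0}$.
\item Theorem~\ref{thm:eigenvalue to surplus} then gives $\surp(G)\ge\frac{\delta}{8}n(n/2)^{\gamma_0}\ge n^{1+\gamma}$.
\end{itemize}
The weaker $n^{1+c}$ bound in \cite{jin2025small} is on the combinatorial side (positive discrepancy). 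Their eigenvalue theorem already has the optimal exponent, and the point of the new bridge is to transfer it without loss. Replace your spectral paragraph with a precise citation of this theorem; with that, your proof coincides with the paper's.
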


The exponent $5/4$ here is tight just as in Theorem \ref{thm:disc close to Turan}. Note that the cliques need not have equal sizes, as any disjoint union of cliques has small surplus. In contrast, a complete multipartite graph has small positive discrepancy only if the parts are roughly of the same size (see Lemma \ref{lem:from complete multipartite to Turan} below).

These results are strongly motivated by the problem of studying the surplus of $H$-free graphs, which was initiated by Erd\H os and Lov\'asz \cite{erdHos1975problems} in the 70's. This problem has received significant attention since then, see, e.g. \cite{alon1996bipartite,alon2003maximum,alon2005maxcut,balla2024maxcut,carlson2021lower,glock2023new,jin2025small,shearer1992note}. Here we focus on the case where $H$ is a clique. A celebrated result of Alon \cite{alon1996bipartite} states that every $K_3$-free graph with $m$ edges has surplus $\Omega(m^{4/5})$, and this is best possible. In general, Alon, Bollob\'as, Krivelevich and Sudakov \cite{alon2003maximum} showed that for each $t$ there exists some $\eps_t>0$ for which every $K_t$-free graph with $m$ edges has surplus $\Omega_t(m^{1/2+\eps_t})$. They made the conjecture that in fact any such graph has surplus $\Omega_t(m^{3/4+\eps'_t})$, for some $\eps'_t>0$, which would be optimal by random graphs with appropriate edge densities. However, until very recently, it was not even known that such graphs have surplus $\Omega_t(m^{1/2+c})$ for some absolute constant $c>0$. It was proved by Jin, Milojevi\'c, Tomon and Zhang \cite{jin2025small} that any $K_t$-free graph with $m$ edges has surplus $\Omega_t(m^{0.51})$, breaking the $1/2$ barrier.

In fact, they prove a variety of results in this direction, all saying that if a graph has no large clique, then its surplus or the absolute value of its least eigenvalue has to be large. The proofs in the eigenvalue setting are simpler (though by no means simple) and provide better bounds. With our new machinery, Theorem \ref{thm:eigenvalue to surplus}, the deduction of bounds for surplus from their bounds for the smallest eigenvalue is almost immediate. We demonstrate this by proving the following improved bound for the surplus of $K_t$-free graphs.

\begin{theorem} \label{thm:Kt free surplus bound}
    For any fixed positive integer $t$, if $G$ is a $K_t$-free graph with $m$ edges, then $\surp(G)\geq m^{0.55 - o(1)}$ as $m\rightarrow \infty$.
\end{theorem}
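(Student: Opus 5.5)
We combine the paper's key lemma (Theorem~\ref{thm:eigenvalue to surplus}, the converse to $v^TMv\le \lambda_1 n$) with the eigenvalue results of Jin, Milojevi\'c, Tomon and Zhang \cite{jin2025small}. The lemma says: if every $n'\times n'$ principal submatrix of $M$ has largest eigenvalue at least $\lambda$, then $\max_{v\in[-1,1]^n} v^TMv \ge \lambda(n-n'+1)$. Take $M=-A$, where $A$ is the adjacency matrix of $G$. For $v\in\{-1,1\}^n$ we have $v^T(-A)v = 4\,e(S,\bar S)-2e(G)$. A standard rounding argument (multilinearity of $v^TAv$, since $A$ has zero diagonal) lets us replace $[-1,1]^n$ by $\{-1,1\}^n$. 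Hence $\surp(G)\ge \frac14\max_{v}v^T(-A)v$, so a lower bound on $|\lambda_{\min}|$ for all large induced subgraphs gives a lower bound on the surplus.

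\textbf{Reduction to bounded degree range.} We may assume $G$ has no isolated vertices and that $m$ is large. We pass to a subgraph $G'$ on $N$ vertices with $e(G')=\Omega(m)$ (up to logarithmic factors) and maximum degree at most a constant times the average degree $D\approx m/N$. This uses a standard dyadic decomposition of the degrees, together with the fact that surplus is essentially monotone under taking induced subgraphs up to lower order terms. A simpler route: $\surp(G)\ge \surp(G[U])$ minus nothing, since the max cut on $U$ extends by placing each remaining vertex greedily. That extension gains at least half of the remaining edges, so $\surp(G)\ge\surp(G[U])$ holds exactly. Hence it suffices to find a well-behaved induced subgraph carrying many edges.

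\textbf{Eigenvalue input and applying the lemma.} The eigenvalue result of \cite{jin2025small} should give: every $K_t$-free graph $H$ with $e$ edges, or with $k$ vertices and average degree $D'$, has $|\lambda_{\min}(H)|\ge$ a power-type bound, of shape $D'^{c}$ or $e^{0.1-o(1)}$ in the relevant regime. I expect the exponent $0.55$ to come out as $\tfrac12+$ the gain in their eigenvalue theorem.

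We choose $n'=N/2$. Every induced subgraph of $G'$ on $N/2$ vertices contains, after deleting a small-density part, a subgraph of average degree $\Omega(D)$; we would need to argue this, or restrict to a piece where it holds. Being an induced subgraph of a $K_t$-free graph, it is also $K_t$-free. Its least eigenvalue is therefore at most $-D^{c}$. Strictly this requires eigenvalue monotonicity under induced subgraphs: by Cauchy interlacing, $\lambda_{\min}$ of a principal submatrix containing the dense part is at most that of the dense part.

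The lemma then gives $\surp(G')\ge \frac14 D^{c}N/2$. We optimise against the trivial bound $\surp\ge\Omega(\sqrt m)$ (Edwards-type bounds, plus $\Omega(N)$ surplus from a matching-type argument) to get $m^{0.55-o(1)}$.

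\textbf{Main obstacle.} The hard step is guaranteeing that \emph{every} half-size induced subgraph has a large negative eigenvalue. A sparse $G'$ could have half of its vertices spanning few edges. I would first handle this iteratively: repeatedly remove sets $U$ of $N/2$ vertices on which $|\lambda_{\min}|$ is small, keeping track of how many edges remain. Alternatively, I would apply the lemma to a weighted matrix, choosing $n'$ adaptively so that any $n'$-set must contain a dense subgraph. Matching the exact exponent $0.55$ depends on the precise form of the \cite{jin2025small} eigenvalue bound, which I would quote in its ``every $K_t$-free graph of average degree $D$ has $\lambda_{\min}\le -D^{0.1-o(1)}$''-type form, adjusted to the correct exponent.
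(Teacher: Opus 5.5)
Your overall architecture matches the paper's: the bridge Theorem~\ref{thm:eigenvalue to surplus} applied to $-A$, the exact monotonicity $\surp(G)\ge\surp(G[U])$ via greedy extension (which you prove correctly), reduction to a piece of bounded degree ratio, and the eigenvalue bound of \cite{jin2025small}. However, the two steps that actually make it work are missing or wrong.

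\textbf{The choice of $n'$.} The step you call the main obstacle is left unresolved, and with $n'=N/2$ it genuinely fails. In a $D$-regular bipartite graph, one side is an independent set of size $N/2$ with $\lambda_{\min}=0$. The missing idea is that bounded degree ratio lets you take $n'$ close to $N$, not $N/2$. If $\Delta(G')\le CD$, set $n'=(1-\frac{1}{10C})N$. Deleting at most $N/(10C)$ vertices destroys at most $ND/10=e(G')/5$ edges. Hence \emph{every} induced subgraph on at least $n'$ vertices is $K_t$-free with average degree at least $\frac45 D$. Theorem~\ref{thm:Kt free eigenvalue bound} then applies to it directly (no interlacing needed), giving $|\lambda_{\min}|=\Omega(D^{\gamma})$ for any fixed $\gamma<1/10$ once $D$ is large. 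Since $N-n'=\Omega(N)$, the bridge yields $\surp(G')=\Omega(ND^{\gamma})$. Because $N\ge D$, this is at least $\Omega((ND)^{(1+\gamma)/2})$. That is where $0.55$ comes from; no optimisation against Edwards-type bounds is involved. Note also that the eigenvalue input must be in terms of average degree. A bound like $|\lambda_{\min}|\ge e^{0.1-o(1)}$ is false, e.g.\ for a perfect matching, where $\lambda_{\min}=-1$.

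\textbf{The reduction to bounded degree ratio.} Your reduction is not correct as stated. An induced subgraph with maximum degree at most $C$ times its average degree and carrying $\Omega(m)$ edges (even up to logarithms) need not exist. In $K_{1,m}$, every such induced subgraph has $O(C)$ edges. Passing to non-induced subgraphs does not help, because surplus is not monotone there: $P_3\subseteq K_3$ but $\surp(P_3)=1>\surp(K_3)=1/2$.

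What is needed is a dichotomy, namely Lemma~\ref{lem:regularization}. For exponents $a<b$ with $a+b\le 2$, either $\surp(G)\ge cn^a d^b$ already, or there is an induced $\tilde G$ with $\Delta(\tilde G)\le C\tilde d$ and $\tilde n^a\tilde d^b\ge cn^ad^b$. Since $\tilde G$ may have far fewer than $m$ edges, the surplus bound on $\tilde G$ must be expressed in the form $\tilde n^a\tilde d^b$ with $a<b$ to transfer back to $G$.

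This is why the paper first weakens the eigenvalue input to the symmetric form $|\lambda_n|\ge m^\alpha/n$ for $\alpha<0.55$ (Corollary~\ref{cor:Kt free eigenvalue bound}). That step uses $n\ge d$, and uses $|\lambda_n|\ge 1$ to force $d$ to be large. The paper then applies Lemma~\ref{lem:regularization} with $(a,b)=(\alpha-\eps/2,\alpha)$ in Proposition~\ref{thm:Kt free bridge result}, together with the choice of $n'$ above. This gives $\surp(G)\ge\surp(\tilde G)=\Omega(e(\tilde G)^\alpha)\ge\Omega(n^{\alpha-\eps/2}d^\alpha)\ge m^{\alpha-\eps}$.
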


We remark that we did not optimise the constant $0.55$, and indeed a better bound can be obtained by using different inputs from \cite{jin2025small} and a slightly more involved deduction. In fact, in upcoming work of Yip \cite{yip2026lower}, the bound $\surp(G)\geq m^{0.614 - o(1)}$ will be proved using our machinery as well as a new result on the eigenvalue side.

\subsection{Eigenvalues}

Let $G$ have eigenvalues $\lambda_1\geq \dots \geq \lambda_n$. It is well known \cite{alon1996bipartite,delorme1993laplacian,mohar1990eigenvalues} that $$\surp(G)\leq \frac{1}{4}n|\lambda_n|.$$ Analogously, as observed by R\"aty, Sudakov and Tomon \cite{raty2026positive}, for every $d$-regular graph, we have 
$$\disc^+(G)\leq \frac{1}{2}n\lambda_2+d.$$ These results show that lower bounds on the surplus and positive discrepancy yield lower bounds for $|\lambda_n|$ and $\lambda_2$. The weaker versions of Theorems \ref{thm:positive disc} and \ref{thm:sp close to Turan} in which $\surp(G)$ and $\disc^+(G)$ are replaced with the corresponding eigenvalue have already been established prior to our work, at least for regular graphs. More precisely, the following eigenvalue version of Theorem \ref{thm:positive disc} was proved independently by Balla \cite{balla2025equiangular}, Ihringer \cite{ihringer2023approximately} and R\"aty, Sudakov and Tomon \cite{raty2026positive} (see also \cite{balla2023note} for a very short proof).
\begin{theorem}[\cite{balla2025equiangular,balla2023note,ihringer2023approximately,raty2026positive}] \label{thm:eigenvalue dense}
    Let $\eps>0$ and let $G$ be a $d$-regular $n$-vertex graph where $1\leq d\leq (1/2-\eps)n$. Then $$\lambda_2(G)=\Omega_{\eps}(d^{1/3}).$$
\end{theorem}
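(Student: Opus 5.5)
The plan is the trace method: compare the second and fourth moments of the spectrum, with the fourth moment controlled through the cube of a single eigenvalue. Write $A$ for the adjacency matrix, with eigenvalues $d=\lambda_1\geq\lambda_2\geq\dots\geq\lambda_n$, and set $\lambda=\max(\lambda_2,1)$.

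\textbf{Step 1 (second moment and the negative mass).} Since $\trace A=0$, we have $\sum_{i\geq 2}\lambda_i=-d$. We also have $\trace A^2=nd$, so $\sum_{i\geq2}\lambda_i^2=nd-d^2=d(n-d)$. Every $\lambda_i$ with $i\geq 2$ satisfies $\lambda_i\leq\lambda$. Split the spectrum into positive and negative parts. The positive part contributes at most $\lambda\cdot\#\{i:\lambda_i>0\}\leq\lambda n$ to the sum. Hence
\[
\sum_{i\geq2,\ \lambda_i<0}|\lambda_i|\leq d+\lambda n .
\]
The same bound controls the positive contribution to the second moment, which is at most $\lambda\sum_{\lambda_i>0}\lambda_i\leq\lambda(d+\lambda n)\cdot$(constant), using $\sum_{i\ge 2}\lambda_i=-d$ again. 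It follows that $\sum_{\lambda_i<0}\lambda_i^2\geq d(n-d)-O(\lambda(d+\lambda n))$. If $\lambda\geq c\,d^{1/3}$ we are done, so assume otherwise. Then $\lambda^2 n\ll d^{2/3}n$ is negligible against $d(n-d)\geq \eps dn$ when $d\geq 1$, after adjusting constants; the tiny cases $d=O(1)$ are trivial since $\lambda_2\geq 1$ unless the graph is a union of complete graphs, which is excluded by $d\leq n/2$ only up to small cases we treat directly. The conclusion is that the negative eigenvalues carry second moment $\Omega_\eps(dn)$ while their total absolute value is $O(d+\lambda n)$.

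\textbf{Step 2 (fourth moment).} By Cauchy--Schwarz applied to the negative eigenvalues,
\[
\Big(\sum_{\lambda_i<0}\lambda_i^2\Big)^2\leq \sum_{\lambda_i<0}|\lambda_i|\cdot\sum_{\lambda_i<0}|\lambda_i|^3 .
\]
This gives $\sum_{\lambda_i<0}|\lambda_i|^3\gtrsim d^2n^2/(d+\lambda n)$. I would rather use the cleaner inequality $\sum\lambda_i^4\geq(\sum\lambda_i^2)^3/(\sum|\lambda_i|)^2$ (Hölder) on the negative part. Combined with $\trace A^4\leq$ (walk count), this should give the bound; but $\trace A^4$ is only trivially at most $nd^3$, which is too weak. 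Instead I use the cubic moment. Note that $\trace A^3=6\cdot\#\text{triangles}\geq0$, so
\[
\sum_{\lambda_i<0}|\lambda_i|^3\leq d^3+\sum_{i\geq2,\lambda_i>0}\lambda_i^3\leq d^3+\lambda^2\sum_{\lambda_i>0}\lambda_i\leq d^3+\lambda^2(d+\lambda n).
\]

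\textbf{Step 3 (combine).} From the two bounds,
\[
d^2n^2\lesssim (d+\lambda n)\big(d^3+\lambda^2(d+\lambda n)\big).
\]
If $\lambda n\leq d$, the right side is $O(d^4)$, forcing $n^2\lesssim d^2$. This is consistent only in the dense range, and must then be handled more carefully. Otherwise $\lambda n\geq d$, and the right side is $O(\lambda n d^3+\lambda^4n^2)$. Then either $\lambda^4\gtrsim d^2$, giving $\lambda\gtrsim d^{1/2}$, or $\lambda\gtrsim n/d$. In the sparse range both already exceed $d^{1/3}$ once $d\leq n^{3/4}$. The main obstacle is the dense regime, where the $d^3$ term from $\lambda_1$ swamps the cubic moment. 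There I would remove $\lambda_1$ by working with $A-\frac dn J$, whose top eigenvalue is removed, and use the identity $\trace A^3=\sum_i\lambda_i^3$ together with $d\leq(1/2-\eps)n$. The inequality $d<n/2$ forces $\sum_{i\ge2}\lambda_i^2\geq \eps dn$ to exceed the $d^3/n$-type contribution, giving $d^2n^2\lesssim \lambda n(d^3+\dots)$ with an improvement of $d^3$ to something like $\lambda^2 dn$. That in turn yields $\lambda^3\gtrsim d$, i.e.\ $\lambda_2=\Omega_\eps(d^{1/3})$. Getting this last cancellation right, with the $\eps$-dependence, is the step I expect to require the most care.
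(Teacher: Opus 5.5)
\paragraph{Verdict.} Your proposal has a genuine gap: it does not handle the dense range, which is the substance of the theorem.

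\paragraph{What works.} The sparse-range computation is essentially sound. With $\lambda=\max(\lambda_2,1)$ it gives $\lambda\gtrsim\min(\sqrt d,\,n/d)$, and this is $\gtrsim d^{1/3}$ exactly when $d\le n^{3/4}$. For bounded $d$ you need $\lambda_2=\Omega_\eps(1)$. This follows from interlacing: the $2\times 2$ principal submatrix of $A-\frac dn J$ on an edge has top eigenvalue $1-2d/n\ge 2\eps$. Your claim that $\lambda_2\ge 1$ unless $G$ is a union of cliques is false; $C_5$ is a counterexample.

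\paragraph{The gap.} The range $n^{3/4}\le d\le(1/2-\eps)n$ is the actual content of the theorem, and your Step 3 has no working mechanism there.
\begin{itemize}
\item The hoped-for ``improvement of $d^3$ to something like $\lambda^2dn$'' in the bound on $\sum_{\lambda_i<0}|\lambda_i|^3$ cannot come from $\trace A^3$. That trace is six times the triangle count, which is $0$ for every bipartite $d$-regular graph with $d<n/2$. For such graphs $\sum_{i\ge 2}\lambda_i^3=-d^3$ exactly. So your inequality $d^2n^2\lesssim(d+\lambda n)(d^3+\lambda^3 n)$ can only return $\lambda\gtrsim n/d$, which is $o(d^{1/3})$ once $d\gg n^{3/4}$.
\item More fundamentally, your sketch uses $d\le(1/2-\eps)n$ only through $d(n-d)\ge\eps dn$. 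That also holds for the Tur\'an graph $T_3(n)$ with $d=2n/3$. There every input you use is satisfied with $\lambda=1$, yet $\lambda_2=0$ and $\sum_{\lambda_i<0}|\lambda_i|^3=2n^3/27$, which is far from $O(\lambda^2 dn)$.
\end{itemize}
So the $1/2$ threshold has to enter through a genuinely different input, not through spectral moments.

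\paragraph{The missing idea.} Bound the most negative eigenvalue by the square of $\lambda_2$, using the $0/1$ entry structure of $A$. Put $q=d/n$.
\begin{itemize}
\item $A-qJ$ has eigenvalues $0,\lambda_2,\dots,\lambda_n$, and $\lambda_2\ge 2\eps>0$ by the interlacing remark above. Hence $X=\lambda_2 I+qJ-A\succeq 0$.
\item By the Schur product theorem,
\[
X\circ X=(1-2q)A+q^2J+(\lambda_2^2+2q\lambda_2)I\succeq 0 .
\]
\item Test this on a unit $\lambda_n$-eigenvector, which is orthogonal to $\mathbf 1$. This gives $(1-2q)|\lambda_n|\le\lambda_2^2+2q\lambda_2$, so $|\lambda_n|=O_\eps(\lambda_2^2)$. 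This is precisely where $q\le 1/2-\eps$ is used.
\end{itemize}
With this in hand your Step 1 finishes the proof, and no cubic moments are needed:
\[
\Omega(dn)\le\sum_{\lambda_i<0}\lambda_i^2\le|\lambda_n|\sum_{\lambda_i<0}|\lambda_i|\le|\lambda_n|(d+\lambda n)=O_\eps(\lambda^3 n),
\]
so $\lambda_2^3=\Omega_\eps(d)$.

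This is the paper's argument in the near-regular eigenvalue lemma of Section~5, which specialises to this theorem for regular $G$. There the second-moment step is phrased as $\sum_{i\ge 2}(\lambda_i-\lambda_n)(\lambda_2-\lambda_i)\ge 0$, which yields $\lambda_2|\lambda_n|\ge d(n-2d)/(n-1)$.
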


The eigenvalue version of Theorem \ref{thm:disc close to Turan} is substantially more involved and was proved in a recent work of Jin, Milojevi\'c, Tomon and Zhang \cite{jin2025small}.

\begin{theorem}[Jin--Milojevi\'c--Tomon--Zhang {\cite[Theorem 1.6]{jin2025small}}] \label{thm:lambda_2 close to Turan}
    For every $\delta > 0$, if $n$ is sufficiently large in terms of $\delta$ and $G$ is a regular $n$-vertex graph which is $\delta$-far from every Tur\'an graph, then $\lambda_2(G) \geq n^{1/4-o(1)}$.
\end{theorem}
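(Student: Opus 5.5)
Since this result is quoted from \cite{jin2025small}, I only outline how I would approach it. The plan is to work in the complement and turn the spectral hypothesis into a local, hereditary condition. Let $G$ be $d$-regular with $\lambda_2(G)\le \lambda:=n^{1/4-\eps}$, and let $H=\overline{G}$, which is $(n-1-d)$-regular. Since $A_H=J-I-A_G$ and $\mathbf{1}$ is a common eigenvector, every eigenvector of $A_H$ orthogonal to $\mathbf{1}$ has eigenvalue at least $-1-\lambda$. Equivalently, $A_H+(1+\lambda)I+cJ\succeq 0$ for a suitable $c\ge 0$. Writing this as a Gram matrix gives vectors $w_v$ with $\|w_v\|^2=1+\lambda+c$, with $\langle w_u,w_v\rangle=1+c$ on edges of $H$ and $\langle w_u,w_v\rangle=c$ on non-edges. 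By Cauchy interlacing, the property that at most one eigenvalue lies below $-1-\lambda$ passes to every induced subgraph of $H$. So every induced subgraph of $H$ has at most one eigenvalue below $-(1+\lambda)$.

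The target becomes: if $H$ is $\delta$-far from every disjoint union of cliques, then $H$ has an induced subgraph with two eigenvalues below $-n^{1/4-o(1)}$. Once that holds, $H$ is $\delta$-close to a union of cliques, so $G$ is close to a complete multipartite graph. Regularity of $G$ then forces the parts to have nearly equal sizes, giving closeness to a Tur\'an graph. This last reduction is the content of Lemma~\ref{lem:from complete multipartite to Turan}, adapted to the regular setting.

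To find the two negative eigenvalues, I use that being $\delta$-far from a union of cliques yields $\Omega_\delta(n^3)$ induced cherries $u-v-w$ with $uw\notin E(H)$. A first step is averaging: pick two vertices $v,v'$ with nearly disjoint structure, each lying in $\Omega(n^2)$ cherries. In $N_H(v)$ the non-edges then form a graph of positive density. Any induced star $K_{1,t}$ has an eigenvalue $-\sqrt t$, so two disjoint, non-interacting induced stars of size $t$ give two eigenvalues at most $-\sqrt t$. This immediately rules out $t\gg\lambda^2$, but stars alone only give bounds like $\lambda\ge n^{c}$ for small $c$.

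The main obstacle is upgrading to the exponent $1/4$. For this I would replace stars by denser test vectors. Inside $N_H(v)$, the non-adjacency graph $F$ has density $\Omega(1)$. The submatrix of $A_H$ on $N_H(v)\cup\{v\}$ is $J-I-A_F$ bordered by the all-ones vector, and its negative part is governed by $\lambda_{\max}$ of $A_F$ restricted to $\mathbf{1}^\perp$. Thus I need $F$ to have a second eigenvalue of order $n^{1/4}$, unless $F$ is itself structured, for instance close to complete multipartite. In that structured case I would iterate the argument one level down. I expect to bound $\lambda_2(F)$ from below via the trace identity $\sum\lambda_i^2=2e(F)$ combined with the Gram-vector rank bound, in the spirit of Theorem~\ref{thm:eigenvalue dense} (the $d^{1/3}$ bound). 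The balance between the number of vectors, about $n$, and the possible near-equiangular configurations, bounded by $\lambda^2$-type rank estimates, should produce the $n^{1/4}$ threshold, in line with de~Caen's construction. Making this iteration lose only $n^{o(1)}$ per level, through a dependent random choice or regularity-type cleaning, is where I expect the real difficulty to lie.
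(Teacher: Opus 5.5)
The paper contains no proof of this statement: it is imported from \cite{jin2025small}, and only its smallest-eigenvalue counterpart, Theorem~\ref{thm:lambda_n close to Turan}, is used later. Judged on its own, your outline has a genuine gap.

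The only step carrying real content is the claim that a graph $\delta$-far from every disjoint union of cliques has an induced subgraph with least eigenvalue below $-n^{1/4-o(1)}$. By interlacing, this is exactly Theorem~\ref{thm:lambda_n close to Turan}. You do not prove it, and the route you sketch makes no progress on it. Your graph $F$ (the non-edges of $H$ inside $N_H(v)$) is simply $G[S]$, where $S$ is the set of non-neighbours of $v$ in $G$. The bound $\lambda_{\min}(H)\le -1-\lambda_2(F)$ therefore amounts to the interlacing inequality $\lambda_2(G[S])\le\lambda_2(G)$. So ``show $\lambda_2(F)\gtrsim n^{1/4}$'' is the original problem again, now for a non-regular graph of unconstrained density. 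The $d^{1/3}$ method of Theorem~\ref{thm:eigenvalue dense} and Lemma~\ref{prop:near regular eigenvalue for pdisc} needs degrees below $(1/2-\eps)|S|$. It genuinely fails in the dense case, since complete multipartite graphs have $\lambda_2\le 0$. That dense case is precisely the one you defer to an unspecified iteration. Nothing in the outline explains where the exponent $1/4$ comes from, or how to keep the loss per level at $n^{o(1)}$; that is the substance of \cite{jin2025small}.

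The reduction you do carry out is sound and can be simplified. Since $G$ is $d$-regular, $A_H$ has eigenvalue $n-1-d\ge 0$ on $\mathbf 1$ and eigenvalues $-1-\lambda_i(G)$, $i\ge 2$, on $\mathbf 1^\perp$. Hence $\lambda_{\min}(H)\ge-(1+\lambda)$ outright, and by interlacing every induced subgraph of $H$ has least eigenvalue at least $-(1+\lambda)$. There is no exceptional eigenvalue: a single eigenvalue below $-(1+\lambda)$ already gives a contradiction. For instance, one induced $K_{1,t}$ with $t>(1+\lambda)^2$ suffices where you used two, and the Gram vectors and the $cJ$ term are unnecessary.

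With Theorem~\ref{thm:lambda_n close to Turan} as a black box, $H$ is $\eps$-close to a union of cliques, so $G$ is $\eps$-close to a complete multipartite graph with parts $V_i$. Regularity then forces the parts to be balanced by a degree count: all but $O(\sqrt{\eps}\, n)$ vertices $u\in V_i$ satisfy $|d-(n-|V_i|)|\le\sqrt{\eps}\, n$. Note that Lemma~\ref{lem:from complete multipartite to Turan} is a positive-discrepancy statement, not a regularity one. Alternatively, $\disc^+(G)\le\frac12 n\lambda_2+d$ for regular $G$ lets you finish directly with Theorem~\ref{thm:disc close to Turan}.

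Either way, your argument establishes only the implication from Theorem~\ref{thm:lambda_n close to Turan} to the statement, not the statement itself.
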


The main goal of this paper is to show that extremal results involving the second and smallest eigenvalue can be transferred to results about the positive discrepancy and MaxCut, respectively. Note that $\surp(G)$ can be substantially smaller than $n|\lambda_n|$, and similarly $\disc^+(G)$ can be much smaller than $n\lambda_2$. For example, if $G$ is the disjoint union of a clique of size $n/2-1$ and a star of size $n/2+1$, then $\surp(G)=O(n)$, while $\lambda_n=-\sqrt{n/2}$. The issue is that $G$ has a sparse induced subgraph, namely the star, which forces $|\lambda_n|$ to be very large, while it has no noticeable effect on the size of the MaxCut. Our main technical result shows that if in every large induced subgraph of $G$ the smallest eigenvalue has large absolute value, then we can conclude that the surplus is large.

\begin{theorem} \label{thm:eigenvalue to surplus}
    Let $G$ be a graph on $n$ vertices, and let $n'\leq n$ be a positive integer. Assume that for any induced subgraph $H$ of $G$ on at least $n'$ vertices, the absolute value of the least eigenvalue of $H$ is at least~$\lambda$. Then $\surp(G)\geq (n - n')\lambda/4$. 
\end{theorem}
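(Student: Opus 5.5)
The plan is to derive Theorem~\ref{thm:eigenvalue to surplus} from a purely matrix-theoretic lemma, the one stated in the abstract. \textbf{Lemma:} if $M$ is a real symmetric $n\times n$ matrix, $\lambda\ge 0$, and every principal submatrix of $M$ of order at least $n'$ has largest eigenvalue at least $\lambda$, then
\[\max_{v\in[-1,1]^n} v^TMv\ \ge\ \lambda(n-n'+1).\]
We apply it with $M=-A$, where $A$ is the adjacency matrix of $G$. Principal submatrices of $-A$ are the matrices $-A(H)$ for induced subgraphs $H$, so the hypothesis says exactly that $\lambda_1(-A(H))=|\lambda_{\min}(H)|\ge\lambda$ whenever $|V(H)|\ge n'$. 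The case $\lambda\le 0$ is trivial because $\surp(G)\ge 0$.

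To pass from the lemma to MaxCut, encode a cut by $v\in\{-1,1\}^n$. Its size is
\[\sum_{ij\in E}\frac{1-v_iv_j}{2}=\frac{e(G)}{2}-\frac14 v^TAv.\]
Hence $\surp(G)=\frac14\max_{v\in\{\pm1\}^n}v^T(-A)v$. Since $A$ has zero diagonal, $v^T(-A)v$ is affine in each coordinate separately. So its maximum over the cube $[-1,1]^n$ is attained at a vertex, which gives $\surp(G)\ge \lambda(n-n'+1)/4\ge (n-n')\lambda/4$.

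For the lemma I would use an iterative rounding argument controlled by the potential $\Phi(v)=v^TMv-\lambda\|v\|^2$.
\begin{enumerate}
\item Start at $v=0$, where $\Phi=0$.
\item At each stage let $U=\{i:|v_i|<1\}$ be the set of free coordinates. While $|U|\ge n'$, take a unit top eigenvector $x$ of $M[U]$, extended by zeros outside $U$, so that $x^TMx\ge\lambda$.
\item Move along $x$. Along the line $v+tx$ we have
\[\Phi(v+tx)-\Phi(v)=2t\,x^T(Mv-\lambda v)+t^2\bigl(x^TMx-\lambda\bigr).\]
The quadratic term is nonnegative. Choose the sign of $t$ so that the linear term is also nonnegative, and increase $|t|$ until the first coordinate of $U$ reaches $\pm1$.
\item Because $x$ is supported on $U$, frozen coordinates never move, and $v$ stays in $[-1,1]^n$. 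Each step freezes at least one new coordinate and does not decrease $\Phi$.
\end{enumerate}

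The process stops once fewer than $n'$ coordinates are free, so at least $n-n'+1$ coordinates equal $\pm1$. Then $\|v\|^2\ge n-n'+1$ and $\Phi(v)\ge 0$. Since $\lambda\ge0$, this gives
\[v^TMv\ \ge\ \lambda\|v\|^2\ \ge\ \lambda(n-n'+1).\]

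The main obstacle I anticipate is choosing the right monovariant. Tracking $v^TMv$ alone fails, because a single step may move $v$ only a tiny distance, so the gain $t^2\lambda$ cannot be charged to the coordinate that freezes. Subtracting $\lambda\|v\|^2$ fixes this: every step then preserves $v^TMv\ge\lambda\|v\|^2$, and the norm bound is used only once, at the very end. The remaining routine checks are that the linear term can always be made nonnegative by the sign choice, including when it is zero, and that the process terminates after at most $n$ steps.
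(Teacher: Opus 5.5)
Your proof is correct and follows the paper's route. You reduce to the matrix statement with $M=-A$, round to $\{\pm1\}^n$ using that $v^T(-A)v$ is affine in each coordinate (zero diagonal), and prove the matrix statement with the potential $v^TMv-\lambda\|v\|^2$ together with a top eigenvector of the free block; the only difference is that the paper argues extremally (taking an admissible vector with the most coordinates equal to $\pm1$, then deriving a contradiction from convexity at the endpoints of the feasible interval), whereas your iterative rounding is exactly the algorithmic version described in the paper's concluding remarks.
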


Theorem \ref{thm:eigenvalue to surplus}, when combined with the smallest eigenvalue version of Theorem \ref{thm:lambda_2 close to Turan}, yields a very short proof of Theorems \ref{thm:disc close to Turan} and \ref{thm:sp close to Turan}. Combining Theorem~\ref{thm:eigenvalue to surplus} with a stronger eigenvalue estimate in place of Theorem~\ref{thm:lambda_2 close to Turan}, an upcoming work of Yip~\cite{yip2026lower} will prove Theorems~\ref{thm:disc close to Turan} and~\ref{thm:sp close to Turan} without the $o(1)$ term in the exponent, fully resolving the conjecture of \cite{raty2026positive}. Our proof of Theorem \ref{thm:positive disc} relies on a slightly more general version of Theorem \ref{thm:eigenvalue to surplus}, which we now introduce.

\subsection{Matrix optimisation}

A fundamental result in operator theory is Grothendieck's inequality, which states that there exists a universal constant $1.67<K_G< 1.79$ with the following property. Let $M$ be an $m\times n$ matrix. A common optimisation problem is to calculate
$$||M||_{\infty\rightarrow 1}=\left|\max_{u\in  [-1,1]^m, v\in [-1,1]^n} u^T Mv\right|=\left|\max_{u_1,\dots,u_m,v_1,\dots,v_n\in [-1,1]}\sum_{i=1}^m\sum_{j=1}^n M_{i,j}u_iv_j\right|.$$
One can relax the right-hand side by replacing $u_i,v_j$ with unit vectors, and consider the semidefinite optimisation problem
$$||M||_{G}=\left|\max_{\mathbf{u}_1,\dots,\mathbf{u}_m,\mathbf{v}_1,\dots,\mathbf{v}_n\in \mathbb{S}^N}\sum_{i=1}^m\sum_{j=1}^n M_{i,j}\langle \mathbf{u}_i,\mathbf{v}_j\rangle \right|,$$
where $\mathbb{S}^N$ denotes the $N$-dimensional unit sphere, and $N$ is arbitrary. Then, Grothendieck's inequality states that this relaxation is a surprisingly good approximation of the original problem, that is,
$$||M||_{\infty\rightarrow 1}\leq ||M||_{G}\leq K_G ||M||_{\infty\rightarrow 1}.$$

Inspired by graph optimisation problems, Charikar and Wirth \cite{charikar2004maximizing} introduced a symmetric variant of Grothendieck's inequality. Given an $n\times n$ symmetric real matrix $M$, define the \emph{positive discrepancy} of $M$ as
\begin{equation*}
    \disc^+(M) := \max_{v\in [-1, 1]^n}v^TMv. 
\end{equation*}
Similarly as before, one might compare this optimisation problem to the semidefinite relaxation
\begin{equation}\label{equ:graph_groth}
    \disc_G^+(M)=\max_{\|\mathbf{v}_i\|\leq 1, i\in [n]} \sum_{i=1}^n\sum_{j=1}^n M_{i,j}\langle v_i,v_j\rangle=\max_{X\succeq 0, X_{i,i}\leq 1, i\in [n]} \langle M,X\rangle,
\end{equation}
where the maximum on the right-hand-side is taken over all $n\times n$ positive semidefinite matrices $X$ with diagonal entries at most 1. In \cite{charikar2004maximizing}, it is proved that 
$$\disc^{+}(M)\leq \disc_G^+(M)\leq O(\disc^+(M)\log n),$$
and Alon, Makarychev, Makarychev, and Naor \cite{alon2006quadratic} showed that the extra logarithmic factor is unavoidable in general. This inequality played a central role in the recent results \cite{jin2025small,raty2026positive} about the MaxCut, minimum bisection and positive discrepancy of graphs. More precisely, for certain suitable matrices $M$ associated with a graph $G$, it was proved that $\disc^+(M)$ is large  by constructing test matrices $X$ for (\ref{equ:graph_groth}) based on the spectral decomposition of $M$.

The novelty in our paper is taking a different, more elementary route to show that $\disc^{+}(M)$ is large, and we do not invoke Grothendieck's inequality and related results in any capacity. Note that if $\lambda_1\geq 0$ is the largest eigenvalue of $M$, then one has $\disc^{+}(M)\leq \lambda_1 n$. We prove the converse that if every not too small principal submatrix of $M$ has large maximum eigenvalue, then that implies that $\disc^{+}(M)$ is also large.

\begin{theorem} \label{thm:matrix bridge}
    Let $M$ be an $n\times n$ symmetric real matrix, and let $n'\leq n$ be a positive integer. Assume that the largest eigenvalue of every  $n'\times n'$ principal submatrix of $M$ is at least $\lambda$. Then $$\disc^+(M)\geq \lambda (n - n'+1).$$ 
\end{theorem}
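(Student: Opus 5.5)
We will reduce to a statement about the shifted matrix $M' := M - \lambda I$ and then use a ``pushing to the boundary'' argument. First, if $\lambda\le 0$ there is nothing to prove, since $v=0$ gives $\disc^+(M)\ge 0\ge \lambda(n-n'+1)$. So assume $\lambda>0$. For every $v$ we have the identity
$$v^TMv = v^TM'v + \lambda\|v\|^2 .$$
By hypothesis, every $n'\times n'$ principal submatrix of $M'$ has largest eigenvalue at least $0$. The plan is to find a point $v\in[-1,1]^n$ such that $v^TM'v\ge 0$ and at least $n-n'+1$ coordinates of $v$ lie in $\{-1,1\}$. For such a $v$ we get $\|v\|^2\ge n-n'+1$, and therefore $v^TMv\ge \lambda(n-n'+1)$, as required.

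To find $v$, consider the continuous function $f(v)=v^TM'v$ on the compact cube $[-1,1]^n$. Among all maximisers of $f$, pick one, $v$, for which the number of coordinates equal to $\pm1$ is as large as possible. Since $f(0)=0$, we have $f(v)\ge 0$. Let $S=\{i: |v_i|<1\}$ be the set of free coordinates. The claim is that $|S|\le n'-1$, which gives the required number of coordinates in $\{-1,1\}$.

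Suppose instead that $|S|\ge n'$. Choose $T\subseteq S$ with $|T|=n'$. The principal submatrix $M'_T$ has a top eigenvector; extend it by zeros to a nonzero vector $w$ supported on $T$. Then $w^TM'w\ge 0$. Now consider
$$f(v+tw)=f(v)+2t\,w^TM'v+t^2\,w^TM'w .$$
This is a convex quadratic in $t$ (or an affine function, if the leading coefficient vanishes). Hence in at least one of the two directions $t\to+\infty$ or $t\to-\infty$ it is non-decreasing from $t=0$. Move $t$ in that direction until the first coordinate $i\in T$ reaches $\pm1$. This happens at a finite $t$, because $w\neq 0$ is supported on free coordinates. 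The coordinates outside $T$ are unchanged, and coordinates already equal to $\pm1$ lie outside $S\supseteq T$, so the new point stays in the cube. It is still a maximiser of $f$, since $f$ did not decrease and $v$ was already maximal. But it has strictly more coordinates in $\{-1,1\}$, contradicting the choice of $v$. Hence $|S|\le n'-1$.

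The only delicate step is this pushing argument. Two points need care there: the stopping time is finite and keeps us inside the cube, and the case $w^TM'w=0$ still works because an affine function is non-decreasing in one direction. Both follow directly from the convexity observation. The shift by $\lambda I$ is what converts the eigenvalue hypothesis into the non-negativity we need, and the term $\lambda\|v\|^2$ then supplies the factor $n-n'+1$.
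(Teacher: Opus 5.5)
Your proof is correct and follows essentially the same route as the paper. The paper also works with $v^T(M-\lambda I)v$: it takes a vector in the cube with $v^TMv\ge\lambda\|v\|^2$ (called ``admissible'') that has the most $\pm1$ coordinates, then pushes along a top eigenvector $w$ supported on the free coordinates, using convexity of $t\mapsto (v+tw)^T(M-\lambda I)(v+tw)$ to contradict that maximality. The only differences are cosmetic: you choose a global maximiser of $v^TM'v$ rather than an arbitrary admissible vector, and you explicitly pass to a subset of size exactly $n'$, which the paper leaves implicit.
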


\sloppy Related results have been proven before. Tropp \cite[Proposition 5.2]{tropp2009column} studies the quantity $\max_{v\in [-1,1]^n}|v^TMv|=\max(\disc^+(M),\disc^+(-M))$, and proves a variant of Theorem \ref{thm:matrix bridge} by reducing it to Grothendieck's inequality. We remark that this quantity is also equal to $\|M\|_{\infty \rightarrow 1}$ up to a factor of two \cite[Corollary 2.6]{friedland2020symmetric}. The main difference between our result and Tropp's is that we lower bound the one-sided discrepancy, which is important for our applications.

In the asymmetric case, Le, Levina and Vershynin \cite[Theorem 2.1]{le2015sparse} proved that if $M$ is an ${m\times n}$ matrix and the largest singular value of every $m'\times n'$ submatrix is at least $\sigma$, then $||M||_{\infty\rightarrow 1}\geq \frac{\sigma}{K_G} \sqrt{(m-m')(n-n')}$. (To be precise, their theorem is stated with $\frac{1}{2}$ instead of $\frac{1}{K_G}$, however, this slightly stronger result follows from their proof.) We slightly improve this result by a completely elementary argument. The next theorem will not be used later, but it might be of independent interest.

\begin{theorem}\label{thm:groth}
     Let $M$ be an $m\times n$ real matrix, and let $m'\leq m$ and $n'\leq n$ be positive integers. Assume that the largest singular value of any $m'\times n'$ submatrix of $M$ is at least $\sigma$. Then $$||M||_{\infty\rightarrow 1}\geq \sigma\sqrt{(m-m'+1)(n - n'+1)}.$$ 
\end{theorem}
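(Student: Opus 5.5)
The plan is an elementary rounding procedure that never uses Grothendieck's inequality. We build $u\in[-1,1]^m$ and $v\in[-1,1]^n$ step by step while maintaining the invariant
$$u^TMv\;\geq\;\sigma\,\|u\|_2\,\|v\|_2 .$$
At the end we will have at least $m-m'+1$ coordinates of $u$ equal to $\pm1$ and at least $n-n'+1$ coordinates of $v$ equal to $\pm1$. Then $\|u\|_2^2\geq m-m'+1$ and $\|v\|_2^2\geq n-n'+1$, which gives $\|M\|_{\infty\to1}\geq u^TMv\geq \sigma\sqrt{(m-m'+1)(n-n'+1)}$. Call a row $i$ \emph{free} if $|u_i|<1$, and write $R$ for the set of free rows; define the free columns $C$ similarly using $v$. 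The process stops when $|R|\leq m'-1$ or $|C|\leq n'-1$.

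First, the initial step. Since $|R|=m\geq m'$ and $|C|=n\geq n'$, pick any $m'\times n'$ submatrix and take its top singular pair $(x,y)$, padded by zeros. Set $u=tx$, $v=ty$ and increase $t$ until a coordinate hits $\pm1$. The invariant holds with ratio $x^TMy\geq\sigma$.

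Second, the general step. We move $u$ only inside the free rows, $u\mapsto u+sx$ with $\operatorname{supp}(x)\subseteq R$, and we choose $x\perp u$. Then $\|u+sx\|_2^2=\|u\|_2^2+s^2\|x\|_2^2$, while $u^TMv$ changes linearly, by $s\,x^TMv$. The norm grows only to second order, so the ratio $u^TMv/(\|u\|\|v\|)$ can be kept above $\sigma$ as long as we gain at least $\sigma\|v\|$ per unit of norm in a suitable averaged sense. The move of $v$ is symmetric. We alternate between $u$-moves and $v$-moves, and equivalently we may run a continuous flow in the pair of directions $(x,y)$. Each move stops when a new coordinate reaches $\pm1$, and that coordinate then becomes frozen, so there are at most $m+n$ steps.

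To find good directions, use the hypothesis on the current free block. While $|R|\geq m'$ and $|C|\geq n'$, the matrix $M_{R\times C}$ contains an $m'\times n'$ submatrix, so its top singular value is at least $\sigma$. The orthogonality requirement $x\perp u$ costs one dimension. This is exactly why only $m-m'+1$ coordinates, rather than $m-m'$, can be guaranteed to be frozen.

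The main obstacle is the last step: showing that some admissible direction pair $(x,y)$, supported on $R\times C$ and orthogonal to $(u,v)$ on the free part, really yields a gain rate of at least $\sigma$. The cross terms $x^TMv$ and $u^TMy$ can be controlled by choosing the sign of $(x,y)$; flipping both signs keeps $x^TMy$ and negates the linear terms. So we may assume the linear terms are nonnegative, and then it remains to show that the bilinear term dominates the growth of $\|u\|\|v\|$. First I would pick $(x,y)$ as the top singular pair of $M$ restricted to the subspace of vectors supported on $R$ (resp. $C$) and orthogonal to $u_R$ (resp. $v_C$). I would then argue via Cauchy interlacing that this compressed matrix still has top singular value at least $\sigma$, because it contains a compression of an $m'\times n'$ submatrix to codimension one inside an $(|R|)\times(|C|)$ block with $|R|\geq m'$. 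If this interlacing fails, the fallback is to stop one step earlier, at $|R|=m'$, and use the one extra dimension directly. Finally, differentiating $\log(u^TMv)-\log\|u\|-\log\|v\|$ along the flow should show that the invariant is preserved.
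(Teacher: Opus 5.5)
Your plan has two genuine gaps.

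\textbf{First gap: one-sided saturation.} The end state does not follow from your stopping rule. You halt as soon as $|R|\le m'-1$ \emph{or} $|C|\le n'-1$, and from then on the hypothesis supplies no further directions. At that point you know, say, $\|u\|_2^2\ge m-m'+1$, but nothing forces $\|v\|_2^2\ge n-n'+1$. Your invariant $u^TMv\ge\sigma\|u\|_2\|v\|_2$ is unchanged under rescaling $u$ and $v$ separately, so it cannot transfer saturation from one side to the other.

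The dynamics are not verified either. Alternating single-side moves have no quadratic gain and would need $x^TMv\ge\sigma\|v\|_2$, which the hypothesis does not give. A joint step of equal length $s$ with $x\perp u$, $y\perp v$, $x^TMy\ge\sigma$ preserves the invariant only if $2\|u\|\|v\|\ge\|u\|^2+\|v\|^2$, i.e.\ $\|u\|=\|v\|$. A continuous flow with $\dot u\perp u$ does not increase $\|u\|$ at all.

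The missing idea is a balancing weight. The paper maximises $F(u,v)=u^TMv-\frac{\sigma}{2}(\theta\|u\|^2+\theta^{-1}\|v\|^2)$ over the box, with $\theta=\sqrt{(n-n'+1)/(m-m'+1)}$. Optimality against $u\mapsto(1-\varepsilon)u$ gives $u^TMv\ge\sigma\theta\|u\|^2$. Hence saturating the rows \emph{alone} already yields $u^TMv\ge\sigma\theta(m-m'+1)=\sigma\sqrt{(m-m'+1)(n-n'+1)}$. Keeping $\|u\|/\|v\|$ locked at $\sqrt{(m-m'+1)/(n-n'+1)}$ is exactly what $\theta$ encodes; your equal-scale start would give only $\sigma\min(m-m'+1,n-n'+1)$.

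\textbf{Second gap: the interlacing claim.} It is false, so the orthogonality constraint is not affordable. Compressing $M_{R\times C}$ to $u_R^\perp\times v_C^\perp$ only guarantees a top singular value of at least $\sigma_3(M_{R\times C})$. The hypothesis says nothing about $\sigma_3$.

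Concretely, take $m=n=10$, $m'=n'=6$, and $M=ab^T$ with $a,b$ generic small perturbations of the all-ones vector, so $\sigma\approx 6$. After your initial step on $R_0\times C_0$, a single coordinate freezes, say row $i_0$, and $u_R$ is parallel to $a_{R_0\setminus\{i_0\}}$. The compressed block then has top singular value about $\|a_{[10]\setminus R_0}\|\,\|b_{[10]\setminus C_0}\|\approx 4<\sigma$, even though $|R|=9$ and $|C|=10$. Your fallback is not an argument.

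The ``$+1$'' is also not the price of orthogonality, and the paper needs no orthogonality. Take unit $w,z$ supported on the free rows and columns with $w^TMz\ge\sigma$. Along $t\mapsto(u+t\theta^{-1/2}w,\,v+t\theta^{1/2}z)$, $F$ is a convex quadratic with leading coefficient $w^TMz-\sigma\ge 0$. So one endpoint of the feasible interval is no worse and strictly more saturated, contradicting the choice of a maximiser with the most saturated coordinates. This works whenever at least $m'$ rows and $n'$ columns are free, which is exactly where the $+1$ comes from.
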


\medskip

\noindent
\textbf{Organisation.} The rest of this paper is organised as follows. In the next section we collect a few preliminary lemmas we will use in our proofs. In Section \ref{sec:eigenvalue to surplus}, we prove Theorems \ref{thm:eigenvalue to surplus}, \ref{thm:matrix bridge} and \ref{thm:groth}. In Section \ref{sec:close to Turan}, we deduce Theorems \ref{thm:disc close to Turan} and \ref{thm:sp close to Turan}. In Section \ref{sec:positive discrepancy}, we prove Theorem \ref{thm:positive disc}. Finally, in Section \ref{sec:ktfree}, we prove Theorem \ref{thm:Kt free surplus bound}.

\section{Preliminaries} \label{sec:preliminaries}

In this section we collect a few standard lemmas that we use in our proofs. We start with a simple lemma relating positive discrepancy to the surplus of the complement graph.

\begin{lemma} \label{lem:sp to disc}
    Let $G$ be a graph on $n$ vertices. Then 
    $$\surp\left(\overline{G}\right)\leq  2\disc^+(G) + \frac{n}{4}.$$ 
\end{lemma}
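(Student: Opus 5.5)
The plan is to express the surplus of $\overline{G}$ for an optimal cut directly through edge counts of $G$ inside the two parts, then bound each part's count by the definition of $\disc^+(G)$. The remaining error term should be at most $n/4$.

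Write $p=e(G)/\binom{n}{2}$, and let $(A,B)$ be a maximum cut of $\overline{G}$ with $a=|A|$ and $b=|B|$, so $a+b=n$. The crossing edges of $\overline{G}$ are the non-edges of $G$ between $A$ and $B$, hence $e_{\overline{G}}(A,B)=ab-e_G(A,B)$. Since every edge of $G$ lies in $G[A]$, in $G[B]$, or across the cut, we have $e_G(A,B)=e(G)-e(G[A])-e(G[B])$. Using $e(\overline{G})=\binom{n}{2}-e(G)$ and $e(G)=p\binom n2$, this gives
\[
\surp(\overline{G})=ab+e(G[A])+e(G[B])-\tfrac12\tbinom{n}{2}-\tfrac{p}{2}\tbinom n2 .
\]

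Next, apply the definition of positive discrepancy to $U=A$ and to $U=B$:
\[
e(G[A])\le \disc^+(G)+p\tbinom a2, \qquad e(G[B])\le \disc^+(G)+p\tbinom b2 .
\]
Then use the identity $\binom a2+\binom b2=\binom n2-ab$. The $p$-terms combine to $p\bigl(\tfrac12\binom n2-ab\bigr)$, and altogether
\[
\surp(\overline{G})\le 2\disc^+(G)+(1-p)\Bigl(ab-\tfrac12\tbinom n2\Bigr).
\]

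Finally, $ab\le n^2/4$ and $\tfrac12\binom n2=n(n-1)/4$, so the bracket is at most $n/4$. If the bracket is negative, the last term is nonpositive and the bound holds trivially. Since $0\le 1-p\le 1$, the last term is at most $n/4$ in either case, which gives the claim.

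The argument is pure bookkeeping, and the only step needing care is the algebra combining the $p$-terms via $\binom a2+\binom b2=\binom n2-ab$. No step is expected to be a real obstacle.
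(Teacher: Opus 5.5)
Your proof is correct and is essentially the paper's argument: the paper also takes a maximum cut of $\overline{G}$, bounds the edges of $G$ inside each side by $p\binom{|U|}{2}+\disc^+(G)$, and reduces to showing that $(1-p)\bigl(\tfrac12\binom{n}{2}-\binom{a}{2}-\binom{b}{2}\bigr)\le n/4$. That bracket equals your $ab-\tfrac12\binom{n}{2}$; the only difference is cosmetic, since the paper counts edges of $\overline{G}$ inside the parts rather than across the cut, and your explicit treatment of the sign of the bracket and of the factor $1-p\in[0,1]$ spells out a step the paper leaves implicit.
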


\begin{proof}
    Let $T\subseteq V(G)$ such that the cut $(T,T^c)$ is maximal in $\overline{G}$, that is, 
    $$ \frac{1}{2}e\left(\overline{G}\right) + \surp\left(\overline{G}\right)=e_{\overline{G}}(T, T^c).$$
    Equivalently,
    $$ \frac{1}{2}e\left(\overline{G}\right) - \surp\left(\overline{G}\right)=e_{\overline{G}}(T) + e_{\overline{G}}(T^c).$$ Let $p$ denote the density of $G$, then $$e_G(T)\leq p\binom{|T|}{2} + \disc^+(G)\quad\text{and}\quad e_G(T^c)\leq p\binom{|T^c|}{2} + \disc^+(G).$$
    By combining the previous estimates, we get
\begin{align*}
\frac{1}{2}e(\overline{G})-\surp(\overline{G})&=e_{\overline{G}}(T) + e_{\overline{G}}(T^c)\\
&=\binom{|T|}{2}-e_{G}(T) +\binom{|T^c|}{2}- e_{G}(T^c)\\
&\geq  (1-p)\left(\binom{|T|}{2}+\binom{|T^c|}{2}\right)-2\disc^+(G).
\end{align*} 
    Writing $e(\overline{G})=(1-p)\binom{n}{2}$ and rearranging this inequality, we obtain
    \begin{equation*}
        (1 - p)\left(\binom{|T|}{2} + \binom{|T^c|}{2} - \frac{1}{2}\binom{n}{2}\right)\leq 2\disc^+(G) -\surp\left(\overline{G}\right).\\
    \end{equation*}
    Since 
    $$\binom{|T|}{2} + \binom{|T^c|}{2} - \frac{1}{2}\binom{n}{2}\geq -\frac{n}{4}$$ by the Cauchy-Schwarz inequality, the lemma follows. 
\end{proof}

Next, we show that if graphs $G$ and $H$ are close in edit distance, then their positive discrepancies are similar.

\begin{lemma} \label{lem:pdisc change}
    Let $G$ and $H$ be graphs of edit distance $s$. Then $$\disc^+(H)\leq \disc^+(G)+s.$$
\end{lemma}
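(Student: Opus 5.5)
The plan is to compare the two discrepancy expressions set by set: fix a maximising set $U$ for $H$ and bound its excess for $H$ by its excess for $G$ plus $s$. The only subtlety is that the densities $p_G$ and $p_H$ differ. A naive triangle inequality would give $2s$, because the change in the edge count inside $U$ and the change in $p\binom{|U|}{2}$ are each bounded by $s$ separately. So the key step is to account for the two contributions jointly rather than separately.

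Write $E(H)=(E(G)\setminus R)\cup A$, where $R\subseteq E(G)$ is the set of removed edges and $A$ the set of added edges. Then $|A|+|R|=s$. Let $u=|U|$ and $q=\binom{u}{2}/\binom{n}{2}\in[0,1]$. Let $a$ and $r$ be the numbers of edges of $A$ and $R$, respectively, that lie inside $U$, so that $0\le a\le |A|$ and $0\le r\le |R|$. Then $e_H(U)=e_G(U)+a-r$. Since $p_G-p_H=(|R|-|A|)/\binom{n}{2}$, we get $p_H\binom{u}{2}=p_G\binom{u}{2}-(|R|-|A|)q$. Hence
$$e_H(U)-p_H\binom{u}{2}=e_G(U)-p_G\binom{u}{2}+\bigl(a-|A|q\bigr)+\bigl(|R|q-r\bigr).$$

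It then remains to bound the two bracketed terms. Using $a\le |A|$, we have $a-|A|q\le |A|(1-q)$. Using $r\ge 0$, we have $|R|q-r\le |R|q$. Their sum is therefore at most $|A|(1-q)+|R|q\le |A|+|R|=s$, since $q\in[0,1]$. Since $e_G(U)-p_G\binom{u}{2}\le\disc^+(G)$, taking the maximum over $U$ gives $\disc^+(H)\le \disc^+(G)+s$.

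The main (mild) obstacle is exactly avoiding the loss of a factor $2$. This is handled by pairing the added edges with the density decrease and the removed edges with the density increase, as above.
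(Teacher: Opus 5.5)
Your proof is correct and is essentially the paper's argument: the paper reduces to a single edit ($s=1$) and iterates, checking that an added edge raises $e_H(U)$ by at most $1$ while also raising the subtracted density term by $q\ge 0$, and that a removed edge does not raise $e_H(U)$ while lowering the subtracted term by only $q\le 1$. You do the same bookkeeping for all $s$ edits at once, which in fact gives the slightly sharper (but unneeded) bound $|A|(1-q)+|R|q\le \max(|A|,|R|)$.
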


\begin{proof}
    It is sufficient to prove the lemma for $s=1$; the general case follows by iteration. Assume that $G$ and $H$ have the same vertex set $V$. Choose $U\subseteq V$ such that $$\disc^+(H)=e_H(U)-\binom{|U|}{2}\frac{e(H)}{\binom{n}{2}}.$$ If $H$ contains one more edge than $G$, then $e_H(U)\leq e_G(U)+1$ and $e(H)=e(G)+1$. If $H$ contains one fewer edge than $G$, then $e_H(U)\leq e_G(U)$ and $e(H)=e(G)-1$. In both cases,
    $$\disc^+(H)=e_H(U)-\frac{\binom{|U|}{2}}{\binom{n}{2}}e(H)\leq e_G(U)-\frac{\binom{|U|}{2}}{\binom{n}{2}}e(G)+1\leq \disc^+(G)+1,$$
    as desired.
\end{proof}

We also make use of two simple results from \cite{raty2026positive}.

\begin{lemma}[{\cite[Claim 4.1]{raty2026positive}}] \label{lem:pdisc from vector}
    For a graph $G$ with edge density $p$ and adjacency matrix $A$, we have
    $$\disc^+(G)\geq \frac{1}{8}\disc^+(A - p(J - I)).$$
\end{lemma}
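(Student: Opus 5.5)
The plan is to reduce the matrix statement to a count of edges over at most two vertex sets. Set $M=A-p(J-I)$; note that $M$ is symmetric with zero diagonal. For $U\subseteq V(G)$ write $D(U)=e(G[U])-p\binom{|U|}{2}$, so that $D(U)\le \disc^+(G)$ for every $U$. The goal is to show $v^TMv\le 8\disc^+(G)$ for every $v\in[-1,1]^n$.

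\emph{Step 1: reduce to sign vectors.} Because the diagonal of $M$ vanishes, $v^TMv=\sum_{i\neq j}M_{i,j}v_iv_j$ is affine in each coordinate $v_i$ separately. An affine function of one variable on $[-1,1]$ attains its maximum at an endpoint. So we can move the coordinates to $\pm1$ one at a time without decreasing the value. Hence it suffices to treat $v\in\{-1,1\}^n$. Write such a vector as $v=\mathbf{1}_S-\mathbf{1}_T$, where $S\cup T=V(G)$ is a partition.

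\emph{Step 2: expand.} For any set $U$ we have $\mathbf{1}_U^TM\mathbf{1}_U=2e(G[U])-p\,|U|(|U|-1)=2D(U)$. For disjoint $S,T$ we have $\mathbf{1}_S^TM\mathbf{1}_T=e_G(S,T)-p|S||T|$. Therefore
$$v^TMv=2D(S)+2D(T)-2\bigl(e_G(S,T)-p|S||T|\bigr).$$

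\emph{Step 3: rewrite the cross term.} This is the only step where anything happens. I will use two identities:
$$e_G(S,T)=e(G)-e(G[S])-e(G[T]),\qquad |S||T|=\binom{n}{2}-\binom{|S|}{2}-\binom{|T|}{2}.$$
Together with $e(G)=p\binom{n}{2}$, which is the definition of $p$, they give
$$e_G(S,T)-p|S||T|=-D(S)-D(T).$$
Substituting into Step 2 yields
$$v^TMv=4\bigl(D(S)+D(T)\bigr)\le 8\disc^+(G).$$
Taking the maximum over $v\in[-1,1]^n$ gives $\disc^+(M)\le 8\disc^+(G)$, which is the claim.

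I do not expect a real obstacle. The points needing care are the multilinearity argument in Step 1, which uses the zero diagonal of $M$, and the exact cancellation in Step 3, which uses that $p$ is precisely the edge density of $G$.
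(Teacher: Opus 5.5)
Your proof is correct and complete. The reduction to $v\in\{-1,1\}^n$ via the zero diagonal of $M$ is the same observation the paper makes just before the proof of Theorem~\ref{thm:eigenvalue to surplus}, and your exact identity $v^TMv=4\bigl(D(S)+D(T)\bigr)$ for $v=\mathbf{1}_S-\mathbf{1}_T$ (where $e(G)=p\binom{n}{2}$ cancels the cross term) gives the constant $8$. The paper itself gives no proof of this lemma; it only cites \cite[Claim 4.1]{raty2026positive}, so your argument serves as a self-contained proof of the cited claim.
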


\begin{lemma}[{\cite[Lemma 2.4]{raty2026positive}}] \label{lem:if weight on high degree}
    Let $\delta \in (0,1)$. Then there exist $c_1,c_2 > 0$ such that the following holds. Let $G$ be an $n$-vertex graph with average degree $d$. Let $X \subseteq V(G)$ such that the degree of every vertex in $X$ is at least $(1 + \delta)d$. Then $$\disc^+(G) \geq c_1(e(X) +e(X,X^c)) - c_2n.$$
\end{lemma}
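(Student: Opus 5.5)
The plan is to test the definition of $\disc^+(G)$ on a random set $U$ that contains all of $X$ and each vertex of $X^c$ independently with probability $q\in[0,1]$. Write $k=|X|$ and $S=e(X)+e(X,X^c)$. By linearity,
$$f(q):=\mathbb{E}\Big[e(G[U])-p\binom{|U|}{2}\Big]=e(X)+q\,e(X,X^c)+q^2e(X^c)-p\,\mathbb{E}\binom{k+\mathrm{Bin}(n-k,q)}{2}.$$
Here $\mathbb{E}\binom{\cdot}{2}$ equals $\tfrac12(k+q(n-k))^2$ up to an error of $O(n)$ after multiplying by $p\le 1$. This error is absorbed into the $c_2 n$ term, so it suffices to find some $q$ with $f(q)\ge c_1S-O(n)$.

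The value at $q=1$ is $f(1)=e(G)-p\binom n2=0$. Up to $O(n)$, the polynomial is $f(1-t)=-t f'(1)+t^2 a$. Here $a=e(X^c)-\tfrac p2(n-k)^2$, which is essentially $e(X^c)-p\binom{n-k}{2}$, the discrepancy of $X^c$.

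\emph{Linear term.} We have
$$f'(1)=e(X,X^c)+2e(X^c)-pn(n-k)\approx \sum_{v\in X^c}\deg v-d(n-k)=dk-\sum_{x\in X}\deg x.$$
Because every $x\in X$ has $\deg x\ge(1+\delta)d$, we get $\sum_{x\in X}\deg x-dk\ge \frac{\delta}{1+\delta}\sum_{x\in X}\deg x\ge \frac{\delta}{1+\delta}S$. Hence $-f'(1)\ge c_\delta S$: removing vertices of $X^c$ gains linearly in $S$.

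\emph{Quadratic term: the main obstacle.} The crude bound $a\ge -dn/2$ is not enough. Optimising $t$ against it only gives a gain of order $S^2/(dn)$, not linear in $S$. So I would split into two cases at a fixed $t$, say $t=1/2$.

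In the first case, $a\ge -c_\delta S$. Then $f(1/2)\ge \tfrac12c_\delta S-\tfrac14c_\delta S-O(n)$, and we are done with $c_1=c_\delta/4$.

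In the second case, $e(X^c)<p\binom{n-k}{2}-c_\delta S$, so $X^c$ is much sparser than average. I would then use the exact identity
$$\disc_U+\disc_{U^c}=p\,k(n-k)-e(X,X^c) \quad\text{for } U=X,$$
where $\disc_W:=e(G[W])-p\binom{|W|}{2}$, together with $\disc_X+\disc_{X^c}+\big(e(X,X^c)-pk(n-k)\big)=0$. This rearranges to
$$\disc_X=-\disc_{X^c}-\big(e(X,X^c)-pk(n-k)\big).$$
The first term $-\disc_{X^c}$ exceeds $c_\delta S$. The second needs an upper bound on $e(X,X^c)$, and I expect this to be the real obstacle.

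If $e(X,X^c)$ is large, I would instead take $U$ to be $X$ together with a random half of $X^c$. Its expected discrepancy picks up $\tfrac12\big(e(X,X^c)-pk(n-k)\big)$ plus a quarter of the already controlled $a$-term, and is positive in that regime.

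Balancing these two or three explicit test sets with suitable constants depending on $\delta$ should give $\max\disc_U\ge c_1S-c_2n$.
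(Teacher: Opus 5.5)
Your set-up is sound, and your first case is correct. Write $D=e(X)-\tfrac p2k^2$ and $C=e(X,X^c)-pk(n-k)$, with $a$ as you define it. Then $g(q)=D+qC+q^2a$, $D+C+a=g(1)=-pn/2$, $f=g+O(n)$, and $-g'(1)=-(C+2a)\ge c_\delta S$ with $c_\delta=\delta/(1+\delta)$.

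The gap is in your second case. There $g(1/2)=D+\tfrac12C+\tfrac14a=-\tfrac12C-\tfrac34a+O(n)$. The term $\tfrac14a$ is not ``already controlled'': in this case $a$ is only bounded from above. So your claim that $X$ plus a random half of $X^c$ has positive expected discrepancy when $e(X,X^c)$ is large is unjustified, and in fact false.

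Here is a counterexample. Take $\delta=0.1$ and $|X|=0.3n$ with $X$ independent. Let every vertex of $X$ have exactly $r$ neighbours in $X^c$, with $r\to\infty$ (e.g.\ $r=\lfloor\sqrt n\rfloor$). Let $e(X^c)=\tfrac{0.34}{2.2}rn$, so that $r=(1+\delta)d$ and $e(X,X^c)=0.66\,e(G)$. In units of $e(G)$ one gets $D\approx-0.09$, $C\approx0.24$ and $a\approx-0.15<-c_\delta S\approx-0.06$. So you are in your second case with $e(X,X^c)$ large. Yet both $\disc_X\approx-0.09\,e(G)$ and $f(1/2)\approx-0.0075\,e(G)$ are negative. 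Since your only test sets are $q=0$ and $q=1/2$, no balancing of thresholds closes the argument. In this example the good choice is $q\approx0.8$, i.e.\ $q$ close to $1$.

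The missing observation is that what you flag as ``the real obstacle'' is trivial: $C\le e(X,X^c)\le S$. So split at $a\ge-2S$ versus $a<-2S$, and use a small $t$ depending on $\delta$ instead of $t=1/2$.
\begin{itemize}
\item If $a\ge-2S$, take $t=c_\delta/4$. This gives $f(1-t)\ge tc_\delta S-2t^2S-O(n)=\tfrac{c_\delta^2}{8}S-O(n)$.
\item If $a<-2S$, take $U=X$. Then $\disc_X\ge D=g(1)-C-a\ge-\tfrac n2-S+2S=S-\tfrac n2$.
\end{itemize}
This proves the lemma with $c_1=c_\delta^2/8$ and an absolute constant $c_2$. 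The paper itself does not prove this lemma; it imports it from R\"aty--Sudakov--Tomon, so there is no in-paper proof to compare your route against.
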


Finally, we recall the following simple regularisation lemma from \cite{glock2023new}.

\begin{lemma}[{\cite[Lemma 2.5]{glock2023new}}] \label{lem:regularization}
    Let $\alpha$ and $\beta$ be real numbers such that $\beta > 0$, $\alpha < \beta$ and $\alpha+\beta \leq 2$. Then there exist positive constants $c$ and $C$ (depending on $\alpha,\beta$), such that the following holds. Let $G$ be a graph on $n$ vertices with average degree $d$. Then either $G$ has surplus at least $cn^{\alpha} d^{\beta}$ or $G$ has an induced subgraph $\tilde{G}$ with $\tilde{n}$ vertices and average degree $\tilde{d}$ where $\tilde{n}^{\alpha} \tilde{d}^{\beta} \geq cn^{\alpha}d^{\beta}$ and $\Delta(\tilde{G}) \leq C\tilde{d}$.
\end{lemma}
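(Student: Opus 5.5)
The plan is an iterative cleaning procedure driven by a potential function, combined with two elementary facts about surplus. First, surplus is monotone and superadditive under taking induced subgraphs on disjoint vertex sets: if $U,W$ are disjoint, take optimal cuts of $G[U]$ and $G[W]$ and flip the cut of $G[W]$ with probability $1/2$. The edges between $U$ and $W$ are then cut with probability $1/2$, so $\surp(G)\ge \surp(G[U])+\surp(G[W])$, and in particular $\surp(G)\ge\surp(G[U])$. Second, I would rewrite the target quantity in terms of vertices and edges. With $m=e(G)$ we have $d=2m/n$, so
$$n^{\alpha}d^{\beta}=2^{\beta}n^{\alpha-\beta}m^{\beta}.$$
Since $\beta>\alpha$, this potential \emph{increases} when vertices are deleted, as long as few edges are lost. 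The hypothesis $\alpha+\beta\le 2$ is what I expect to make the high-degree case pay off in surplus.

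The procedure runs as follows. Set $G_0=G$. Given $G_i$ with $n_i$ vertices and average degree $d_i$, stop if $\Delta(G_i)\le Cd_i$. Otherwise let $X_i$ be the set of vertices of degree at least $Cd_i$; counting degrees gives $|X_i|\le n_i/C$. Let $\eta>0$ be a small constant.

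\emph{Case 1: at most $\eta e(G_i)$ edges meet $X_i$.} Delete $X_i$. Then $n$ decreases and $m$ drops by a factor at least $1-\eta$, so the potential is multiplied by at least $(1-\eta)^{\beta}(1-1/C)^{\alpha-\beta}$. This factor has to be made close to $1$ in aggregate. Since $n$ drops by a constant factor at each such step, I would let the tolerated loss $\eta_i$ depend on how far $n_i$ has fallen (for instance $\eta_i$ decaying geometrically along the steps). Alternatively I would note that the gain $(1-1/C)^{\alpha-\beta}>1$ from shrinking $n$ can absorb a loss $(1-\eta)^\beta$ once $\eta\ll (\beta-\alpha)/C$. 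With that choice each Case 1 step does not decrease $n^\alpha d^\beta$ at all.

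\emph{Case 2: more than $\eta e(G_i)$ edges meet $X_i$.} Either $e(G_i[X_i])\ge \tfrac{\eta}{2}e(G_i)$ or $e(X_i,X_i^c)\ge\tfrac{\eta}{2}e(G_i)$.

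In the first subcase, pass to $G_{i+1}=G_i[X_i]$. We have $n_{i+1}\le n_i/C$ while $m$ drops only by a constant factor, so for $C$ large the potential grows by a factor $C^{\beta-\alpha}\eta^\beta/2^{\beta}\ge 1$. By monotonicity of surplus, any surplus found later is also surplus of $G$.

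In the second subcase, I would show directly that $\surp(G_i)$ is large. Take a uniformly random cut of $Y=X_i^c$. Each $x\in X_i$ then sees an imbalance of order $\sqrt{\deg_Y(x)}$ between the two sides. Placing $x$ on the side opposite its majority gains $\Omega(\sqrt{\deg_Y(x)})$ beyond half of its edges to $Y$. The edges inside $X_i$ are kept at expected value $1/2$ by randomly swapping the two options for the whole set $X_i$, or, more cleanly, by first fixing a random bipartition of $X_i$ and then choosing for each $x$ the orientation relative to $Y$.

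This yields $\surp(G_i)\gtrsim\sum_{x\in X_i}\sqrt{\deg_Y(x)}$. I then have to compare this with $n_i^\alpha d_i^\beta$ using $\sum_x\deg_Y(x)\ge \eta n_id_i/4$, $\deg_Y(x)\le n_i$, and the lower bounds $\deg(x)\ge Cd_i$.

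\textbf{Main obstacle.} The hard part is exactly this last comparison. The crude concavity bound $\sum\sqrt{\deg_Y}\ge \eta n_id_i/\sqrt{n_i}$ is too weak when $\beta$ is close to $2-\alpha$. So I expect to refine the split: bucket the vertices of $X_i$ by dyadic degree into $Y$. In buckets where the degrees are large relative to $d_i$, instead pass to the induced subgraph on that bucket together with its neighbourhood, arguing again via the potential. Only in the remaining situation, where the degrees are comparable to $Cd_i$, use the $\sqrt{\deg}$ gain, which there gives $\gtrsim n_i\sqrt{d_i}$ against $n_i^\alpha d_i^\beta\le n_i^{2-\beta}d_i^\beta$. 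Whether this closes for all admissible $(\alpha,\beta)$ is the point I would check first. If it does not, I would replace the random cut of $Y$ by an Edwards-type bound $\surp\ge\Omega(\sqrt{e})$ applied to the bipartite graph between $X_i$ and $Y$.

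Finally, the procedure terminates because $n_i$ strictly decreases. At termination, $\tilde G=G_i$ has $\Delta(\tilde G)\le C\tilde d$ and $\tilde n^\alpha\tilde d^\beta\ge c\,n^\alpha d^\beta$.
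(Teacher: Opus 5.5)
\textbf{There is a genuine gap in your second subcase of Case 2, and the tool you propose there cannot close it.} The framework itself is sound: the potential $n^{\alpha}d^{\beta}=2^{\beta}n^{\alpha-\beta}e(G)^{\beta}$, recursing into $G_i[X_i]$ when $X_i$ spans a constant fraction of the edges, and monotonicity of surplus under induced subgraphs.

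A random cut of $Y$, followed by placing each $x\in X_i$ against its majority, gains only about $\sum_{x\in X_i}\sqrt{\deg_Y(x)}$. This can be far below $n_i^{\alpha}d_i^{\beta}$. Take $X$ an independent set of $d=n^{0.9}$ vertices, each joined to about half of an independent set $Y$ of size $n$. Then $X_i=X$, every edge goes between $X$ and $Y$, and your gain is about $d\sqrt n=n^{1.4}$. For the admissible pair $(\alpha,\beta)=(0,2)$ the target is $d^2=n^{1.8}$.

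The dyadic bucketing does not help here. All of $X$ lies in one bucket whose neighbourhood is all of $Y$, so passing to ``bucket plus neighbourhood'' returns $G$ itself. An Edwards-type $\Omega(\sqrt e)$ bound is weaker still. This graph is bipartite, so it is your method that fails, not the lemma.

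The edges inside $X_i$ are also mishandled. If each $x$'s side is a function of the cut on $Y$, adjacent $x,x'$ with overlapping $Y$-neighbourhoods are positively correlated. A global swap preserves $\sigma_x\sigma_{x'}$. A preliminary random bipartition of $X_i$ is either overridden by the per-vertex choices, or, if only its orientation is chosen, yields one fluctuation rather than one per vertex. Finally, splitting at $\frac{\eta}{2}e(G_i)$ leaves up to $\frac\eta2 e(G_i)$ edges inside $X_i$, far more than any gain can pay for.

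\textbf{The missing idea is a one-sided biased cut, not $\sqrt{\deg}$ fluctuations.} Split Case 2 at $e(X_i)\ge \eta' e(G_i)$, with $\eta'\le\eta^2/16$. In that case recurse into $G_i[X_i]$, choosing $C$ with $C^{\beta-\alpha}\eta'^{\beta}\ge1$. Otherwise $e(X_i)<\eta'e(G_i)$ and $e(X_i,Y_i)>(\eta-\eta')e(G_i)$, where $Y_i=V(G_i)\setminus X_i$. Put all of $X_i$ on one side. Put each $y\in Y_i$ independently on the other side with probability $\frac12+\phi_y$, where $\phi_y=\deg_X(y)/(2\deg(y))$ and degrees are taken in $G_i$. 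An edge $yy'$ inside $Y_i$ is then cut with probability $\frac12-2\phi_y\phi_{y'}$, and $2\phi_y\phi_{y'}\le\phi_y^2+\phi_{y'}^2$. So the expected surplus of this cut is at least
\[
\sum_{y\in Y_i}\bigl(\phi_y\deg_X(y)-\phi_y^2\deg_Y(y)\bigr)-\frac{e(X_i)}{2}\ \ge\ \sum_{y\in Y_i}\frac{\deg_X(y)^2}{4\deg(y)}-\frac{e(X_i)}{2}\ \ge\ \frac{e(X_i,Y_i)^2}{8e(G_i)}-\frac{e(X_i)}{2}.
\]
The last step is Cauchy--Schwarz together with $\sum_{y}\deg(y)\le 2e(G_i)$, and the bound is $\Omega(\eta^2e(G_i))$.

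Now use $\alpha<\beta$, $\alpha+\beta\le2$ and $d_i\le n_i$ to get $n_i^{\alpha}d_i^{\beta}\le n_id_i=2e(G_i)$. This is the only place $\alpha+\beta\le 2$ is needed. Since the potential never decreased along the recursion, $\surp(G)\ge\surp(G_i)=\Omega(n^{\alpha}d^{\beta})$.

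\textbf{Case 1 also needs repair.} The bound $|X_i|\le n_i/C$ is only an upper bound; $X_i$ may be a single vertex. So there is no guaranteed gain $(1-1/C)^{\alpha-\beta}$, and $n$ need not drop by a constant factor. Repeated Case 1 steps can each lose a factor $(1-\eta)^\beta$. Letting $\eta_i\to0$ instead breaks the recursion step, which needs a fixed $\eta$ to choose $C$.

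The fix is not to iterate. The graph $G_i-X_i$ has average degree $\tilde d\ge(1-\eta)d_i$ and maximum degree below $Cd_i\le\frac{C}{1-\eta}\tilde d$. So stop there with constant $C/(1-\eta)$, losing the factor $(1-\eta)^{\beta}$ in the potential only once.
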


\section{From eigenvalues to surplus and positive discrepancy} \label{sec:eigenvalue to surplus}

In this section we prove Theorems \ref{thm:eigenvalue to surplus}, \ref{thm:matrix bridge} and \ref{thm:groth}.

\begin{proof}[Proof of Theorem \ref{thm:matrix bridge}]
     If $\lambda<0$, the statement is trivial, so we may assume $\lambda\geq 0$. Call a vector $v\in [-1, 1]^n\subseteq \mathbb{R}^n$ \emph{admissible} if $v^TMv\geq \lambda \|v\|_2^2$. Such vectors exist. For instance, the zero vector is admissible. Take $v$ to be an admissible vector maximising the size of the set $T = \{i\in [n]:|v_i| = 1\}$. If the size of this set is larger than $n - n'$, then 
    \begin{equation*}
        v^TMv\geq \lambda \|v\|_2^2 \geq \lambda (n - n'+1), 
    \end{equation*}
    which implies $\disc^+(M)\geq \lambda (n - n'+1)$, as needed. Therefore, we may assume that $|T| \leq  n - n'$. Let $S = T^c$. Since $|S| \geq  n'$, the maximum eigenvalue of $M[S\times S]$ is at least $\lambda$. In other words, there exists a non-zero vector $w\in \mathbb{R}^n$ supported on $S$ such that 
    \begin{equation*}
        w^TMw \geq \lambda\|w\|_2^2. 
    \end{equation*}
    For $t\in \mathbb{R}$, let $v_t = v + tw$. Note that
    \begin{equation*}
        f(t):= v_t^TMv_t - \lambda v_t^Tv_t = (w^TMw - \lambda w^Tw)t^2 + (2w^TMv - 2\lambda w^Tv)t + (v^TMv - \lambda v^Tv)
    \end{equation*}
    is convex. Let the interval $I = [a, b]$ be the range of values of $t$ for which $v_t\in [-1, 1]^n$. Note that 
    $$|\{i\in [n]:|(v_a)_i| = 1\}|\quad\text{and}\quad|\{i\in [n]:|(v_b)_i| = 1\}|$$ are both strictly greater than $|T|$. Therefore, by the assumption of the maximality of $v$, we may infer that neither $v_a$ nor $v_b$ is admissible. That is, $f(a)$ and $f(b)$ are both negative. This is a contradiction to the fact that $f(0)\geq 0$ and that $f$ is convex.
\end{proof}

We now deduce Theorem~\ref{thm:eigenvalue to surplus} from Theorem~\ref{thm:matrix bridge}. Note that when the diagonal entries of $M$ are non-negative, the function $v\mapsto v^TMv$ is entry-wise convex. Therefore, we may take the maximum of $v^TMv$ over $v\in \{\pm 1\}^n$ instead of $[-1, 1]^n$ in the definition of $\disc^+(M)$. In other words, we have
\begin{equation*}
    \disc^+(M) = \max_{v\in [-1, 1]^n}v^TMv = \max_{v\in \{\pm 1\}^n}v^TMv. 
\end{equation*}

\begin{proof}[Proof of Theorem \ref{thm:eigenvalue to surplus}]
     We apply Theorem~\ref{thm:matrix bridge} to $M = -A$, for which the conditions of Theorem~\ref{thm:matrix bridge} are satisfied. Therefore, there exists $v\in \{\pm 1\}^n$ for which
     \begin{equation*}
         -v^TAv = v^TMv\geq (n - n')\lambda. 
     \end{equation*}
     The desired result now follows since $-v^TAv/4$ is the surplus of the cut $\{i : v_i = 1\}, \{i : v_i = -1\}$. 
\end{proof}

Finally, we prove Theorem \ref{thm:groth}, whose proof is very similar to the proof of Theorem \ref{thm:matrix bridge}. 

\begin{proof}[Proof of Theorem \ref{thm:groth}]
Let $\theta=\sqrt{(n-n'+1)/(m-m'+1)}$ and for $(u,v)\in \mathbb{R}^m\times  \mathbb{R}^n$, define
$$F(u,v)=u^TMv-\frac{\sigma}{2}(\theta ||u||^2+\theta^{-1}||v||^2).$$
Choose a pair $(u,v)$ that maximises $F(u,v)$ on the compact set $[-1,1]^m\times [-1,1]^n$. Moreover, writing $S=\{i\in [m]: |u_i|=1\}$ and $T=\{j\in [n]: |v_j|=1\}$, among all maximisers, choose one that  maximises $|S|+|T|$. First, we prove that either $|S|\geq m-m'+1$ or $|T|\geq n-n'+1$. Assume to the contrary that $S^c=[m]\setminus S$ has size at least $m'$ and $T^c=[n]\setminus T$ has size at least $n'$. Then by our assumption, there exist unit vectors $(w,z)\in \mathbb{R}^m\times \mathbb{R}^n$ such that $w$ is supported only on $S^c$, $z$ is supported only on $T^c$, and $w^T M z\geq \sigma$. 

For $t\in \mathbb{R}$, consider the perturbations 
$$u_t=u+t\theta^{-1/2}w\quad\text{and}\quad v_t=v+t\theta^{1/2}z.$$
Let $I=[a,b]$ be the range of values of $t$ for which $u_t\in [-1,1]^m$ and $v_t\in [-1,1]^n$. Note that $0$ is an interior point of $I$. Moreover, 
$$|\{i\in [m]: |(u_a)_i|=1\}|+|\{j\in [n]:|(v_a)_j|=1\}|\quad\text{and}\quad|\{i\in [m]: |(u_b)_i|=1\}|+|\{j\in [n]:|(v_b)_j|=1\}|$$
are strictly larger than $|S|+|T|$. Consider the function
\begin{align*}
f(t)&=F(u_t,v_t)-F(u,v) \\
&=Lt+t^2(w^TMz-\sigma),
\end{align*}
where $L$ does not depend on $t$.
Then $f(0)=0$ and $f$ is a convex function, so we must have $f(a)\geq 0$ or $f(b)\geq 0$, contradicting the maximality of $(u,v)$.

Thus, we proved that either $|S|\geq m-m'+1$ or $|T|\geq n-n'+1$. Assume that $|S|\geq m-m'+1$; the other case can be handled similarly. For $\eps\in (0,1)$, consider
$$F(u,v)-F((1-\eps)u,v)=\eps (u^TMv)-\frac{\sigma}{2}\theta (2\eps-\eps^2) ||u||^2.$$
Taking $\eps\rightarrow 0$, the maximality of $F(u,v)$ implies that 
$$u^TMv\geq \sigma \theta ||u||^2\geq \sigma \theta |S|\geq \sigma \theta (m-m'+1)\geq \sigma \sqrt{(m-m'+1)(n-n'+1)},$$
as desired.
\end{proof}

\section{Disjoint union of cliques and Tur\'an graphs} \label{sec:close to Turan}

In this section, we prove Theorems \ref{thm:sp close to Turan} and \ref{thm:disc close to Turan}.
We start with the former. The crucial ingredient, in addition to Theorem \ref{thm:eigenvalue to surplus}, is the following eigenvalue version of Theorem \ref{thm:sp close to Turan}, proved in \cite{jin2025small}.

\begin{theorem}[Jin--Milojevi\'c--Tomon--Zhang {\cite[Theorem 1.4]{jin2025small}}] \label{thm:lambda_n close to Turan}
    Let $\gamma \in (0,1/4)$, let $\delta > 0$, and let $n$ be sufficiently large with respect to $\gamma$ and $\delta$. If $G$ is an $n$-vertex graph with $|\lambda_n| \leq n^{\gamma}$, then $G$ is $\delta$-close to the vertex-disjoint union of cliques.
\end{theorem}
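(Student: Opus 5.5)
The plan is to exploit the positive semidefinite representation that a small least eigenvalue provides. Write $\mu=|\lambda_n|\le n^{\gamma}$ and let $A$ be the adjacency matrix. Then $A+\mu I\succeq 0$, so there are vectors $\mathbf{v}_1,\dots,\mathbf{v}_n$ with $\|\mathbf{v}_i\|^2=\mu$, $\langle \mathbf{v}_i,\mathbf{v}_j\rangle=1$ if $ij\in E(G)$ and $\langle \mathbf{v}_i,\mathbf{v}_j\rangle=0$ otherwise. Also, by interlacing every induced subgraph inherits $\lambda_{\min}\ge -\mu$, so the same representation applies locally.

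\textbf{Reducing to a dense configuration.} A graph is a disjoint union of cliques exactly when it has no induced path $P_3$ (a cherry $x\!-\!w\!-\!y$ with $xy\notin E$). If $G$ is $\delta$-far from every union of cliques, the induced removal lemma gives $\Omega_\delta(n^3)$ induced cherries. Averaging over the middle vertex, we obtain a vertex $w$ whose neighbourhood $N=N(w)$ has $|N|=\Omega(n)$ and such that $G[N]$ has $\Omega(n^2)$ non-edges. With more care, a positive proportion of such vertices $w$ can be found, which is useful for a later averaging step.

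\textbf{Geometric step inside the neighbourhood.} Inside $N$ every $\mathbf{v}_i$ has inner product $1$ with $\mathbf{v}_w$. First I would record the easy Bessel bound: an independent set $I\subseteq N$ gives pairwise orthogonal vectors, each with squared projection $1/\mu$ onto $\mathbf{v}_w/\sqrt{\mu}$. Hence $|I|/\mu\le\mu$, i.e.\ $\alpha(G[N])\le\mu^2$. Next, project out $\mathbf{v}_w$: the vectors $\mathbf{u}_i=\mathbf{v}_i-\mathbf{v}_w/\mu$ for $i\in N$ satisfy $\|\mathbf{u}_i\|^2=\mu-1/\mu$, have inner product $1-1/\mu$ on edges and $-1/\mu$ on non-edges. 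Consequently, for any $S\subseteq N$,
\[
0\le\Big\|\sum_{i\in S}\mathbf{u}_i\Big\|^2=|S|\mu+2e(S)-\frac{|S|^2}{\mu}.
\]
This says that any set $S\subseteq N$ with $|S|\gg\mu^2$ must have $e(S)\ge |S|^2/(2\mu)$, i.e.\ it cannot be too sparse. I would then iterate this inside $G[N]$, which still has many non-edges and hence, by removal again, many induced cherries. Choosing a second vertex $w'\in N$ and working in $N(w)\cap N(w')$, or in the common non-neighbourhood, projects out a second vector and gains another factor of $\mu$ in the density constraint. The goal is to find, after a bounded number of steps, a set of size $\Omega(n)$ that is simultaneously forced to be nearly complete and far from complete; this yields $n\lesssim\mu^4$, i.e.\ $\mu\ge n^{1/4-o(1)}$, contradicting $\gamma<1/4$.

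\textbf{Main obstacle.} The hard part is this quantitative iteration. The single-projection inequality above is not contradictory on its own: the density $1/\mu$ it forces is far below the $\Omega(1)$ non-edge density we have. Each projection or restriction must be arranged so that linear size and a constant proportion of non-edges are preserved while the forced density grows. I would first try a second-moment argument on the Gram matrix restricted to $N$: compare $\operatorname{tr}\big((A[N]+\mu I)^2\big)$ with $\big(\operatorname{tr}(A[N]+\mu I)\big)^2/\operatorname{rank}$. The goal there is to show that the $\Omega(n^2)$ non-edges force $\Omega(n/\mu^{O(1)})$ nearly orthogonal directions. That would be combined with the Bessel-type bound through $\mathbf{v}_w$ to pin the exponent at $1/4$. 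Getting exactly $1/4$, rather than a weaker power, is where I expect the real difficulty to lie.
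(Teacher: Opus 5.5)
The paper gives no proof of this statement. It is imported from \cite{jin2025small} (Theorem~1.4 there) and used only as a black box in the proof of Theorem~\ref{thm:sp close to Turan}'. Judged on its own, your proposal has a genuine gap, exactly where you place the ``main obstacle''. What you do establish is correct:
the Gram vectors with $\|\mathbf v_i\|^2=\mu$;
heredity of $\lambda_{\min}\ge-\mu$;
$\Omega_\delta(n^3)$ induced cherries;
a vertex $w$ with $|N(w)|=\Omega(n)$ and $\Omega(n^2)$ non-edges inside;
$\alpha(G[N(w)])\le\mu^2$ (equivalently, an induced $K_{1,s}$ has least eigenvalue $-\sqrt s$);
and $2e(S)\ge|S|^2/\mu-\mu|S|$ for $S\subseteq N(w)$.
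But none of this gives a polynomial lower bound on $\mu$. The step that is supposed to yield $n\lesssim\mu^4$ is never carried out.

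Moreover, the iteration you sketch cannot carry it out as described. Suppose every vertex of $S$ is adjacent to all vertices of a clique $K$ with $|K|=k$. Project the $\mathbf v_i$, $i\in S$, off $\mathrm{span}\{\mathbf v_x:x\in K\}$, whose Gram matrix is $(\mu-1)I+J$. The residuals satisfy $\langle\mathbf u_i,\mathbf u_j\rangle=A_{ij}-c_k$ for $i\neq j$, where $c_k=k/(\mu-1+k)$, and this gives only $2e(S)\ge c_k|S|^2-\mu|S|$. So adding a second vertex $w'\in N(w)$ turns $1/\mu$ into $2/(\mu+1)$: a constant factor, not a factor $\mu$. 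Passing instead to $N(w)\setminus N(w')$ gives $\mu/(\mu^2-1)$, essentially no gain.

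To contradict a non-edge density $\eta$ in $S$ you need $c_k>1-\eta$, i.e.\ $k\gtrsim\mu/\eta$, which is polynomial in $n$ in the critical range $\mu\approx n^{\gamma}$. That means polynomially many rounds of restricting to common neighbourhoods. Each round may cost a constant factor in $|S|$, so you may be left with $e^{-\Omega(\mu)}n$ vertices, which is useless unless $\mu=O(\log n)$. There is therefore no bounded-step version, and nothing in the sketch produces the exponent $1/4$.

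The trace comparison does not rescue this. The bound $\operatorname{rank}X\ge(\trace X)^2/\trace(X^2)$ only gives $\operatorname{rank}X\gtrsim\mu^2$, and nothing in this setting bounds the dimension from above to play it against. A genuinely global argument is needed, which is what \cite{jin2025small} supplies.
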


We are now ready to prove Theorem \ref{thm:sp close to Turan}, which we restate in the following equivalent form.

\begin{custhm}{\ref{thm:sp close to Turan}'} 
    Let $\gamma \in (0,1/4)$, let $\delta > 0$, and let $n$ be sufficiently large with respect to $\gamma$ and $\delta$. If $G$ is an $n$-vertex graph with $\surp(G) \leq n^{1+\gamma}$, then $G$ is $\delta$-close to a disjoint union of cliques.
\end{custhm}

\begin{proof}
    We prove that if $G$ is $\delta$-far from every graph that is a disjoint union of cliques, then $\surp(G)\geq n^{1+\gamma}$. Fix some $\gamma_0\in (\gamma,1/4)$. Observe that every induced subgraph of $G$ on at least $(1-\delta/2)n$ vertices is $(\delta/2)$-far from a disjoint union of cliques. Hence, by Theorem \ref{thm:lambda_n close to Turan}, if $n$ is sufficiently large in terms of $\delta $ and $\gamma_0$, then each such induced subgraph of $G$ has smallest eigenvalue at least $(n/2)^{\gamma_0}$ in absolute value. Thus, by Theorem \ref{thm:eigenvalue to surplus}, $\surp(G)\geq \frac{\delta}{2}n\cdot (n/2)^{\gamma_0}/4\geq n^{1+\gamma}$ for sufficiently large $n$.
\end{proof}

Next, we prove Theorem \ref{thm:disc close to Turan} in the following equivalent form.

\begin{custhm}{\ref{thm:disc close to Turan}'} 
    Let $\gamma \in (0,1/4)$, let $\delta > 0$, and let $n$ be sufficiently large with respect to $\gamma$ and $\delta$. If $G$ is an $n$-vertex graph with $\disc^+(G) \leq n^{1+\gamma}$, then $G$ is $\delta$-close to a Tur\'an graph.
\end{custhm}

We prove this result in two steps. First, we use Theorem \ref{thm:sp close to Turan}' to show that under the assumptions of the theorem, $G$ is close to a complete multipartite graph (with not necessarily balanced parts).

\begin{proposition} \label{prop:disc close to multipartite}
    Let $\gamma \in (0,1/4)$, let $\eps > 0$, and let $n$ be sufficiently large with respect to $\gamma$ and $\eps$. If $G$ is an $n$-vertex graph with $\disc^+(G) \leq n^{1+\gamma}$, then $G$ is $\eps$-close to a complete multipartite graph.
\end{proposition}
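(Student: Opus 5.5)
Apply Lemma~\ref{lem:sp to disc} to pass from $G$ to its complement, then invoke Theorem~\ref{thm:sp close to Turan}'. The complement of a disjoint union of cliques is a complete multipartite graph, and complementation preserves edit distance exactly.

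Concretely, suppose $\disc^+(G)\leq n^{1+\gamma}$. By Lemma~\ref{lem:sp to disc},
$$\surp(\overline{G})\leq 2\disc^+(G)+\frac{n}{4}\leq 2n^{1+\gamma}+\frac{n}{4}.$$
Fix $\gamma'\in(\gamma,1/4)$. For $n$ sufficiently large in terms of $\gamma$ and $\gamma'$, we get $2n^{1+\gamma}+n/4\leq n^{1+\gamma'}$, so $\surp(\overline{G})\leq n^{1+\gamma'}$. Theorem~\ref{thm:sp close to Turan}' applies with parameters $\gamma'$ and $\delta=\eps$, provided $n$ is large enough in terms of $\gamma'$ and $\eps$. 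Since $\gamma'$ depends only on $\gamma$ (say $\gamma'=(\gamma+1/4)/2$), this largeness depends only on $\gamma$ and $\eps$. The theorem then says $\overline{G}$ is $\eps$-close to some disjoint union of cliques $K$, i.e.\ the edit distance between $\overline{G}$ and $K$ is at most $\eps n^2$.

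Finally, $\overline{K}$ is complete multipartite, with parts the vertex sets of the cliques (singletons allowed, and a single part giving the empty graph). A pair of vertices is an edge of $\overline{G}$ but not of $K$ exactly when it is an edge of $\overline{K}$ but not of $G$, and symmetrically. Hence the edit distance between $G$ and $\overline{K}$ equals that between $\overline{G}$ and $K$, which is at most $\eps n^2$, so $G$ is $\eps$-close to a complete multipartite graph.

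I do not expect a genuine obstacle. The only points needing care are absorbing the additive $n/4$ and the factor $2$ by slightly increasing the exponent (which is why $\gamma<1/4$ must be strict, leaving room for $\gamma'$), and checking that "disjoint union of cliques" in Theorem~\ref{thm:sp close to Turan}' allows cliques of arbitrary sizes, including single vertices, so that its complement is an arbitrary complete multipartite graph, as the proposition allows.
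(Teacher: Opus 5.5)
Your proposal is correct and follows the paper's proof exactly: pass to the complement via Lemma~\ref{lem:sp to disc}, absorb the factor $2$ and the additive $n/4$ by moving to some $\gamma'\in(\gamma,1/4)$, apply Theorem~\ref{thm:sp close to Turan}' to $\overline{G}$, and take complements. You also check explicitly that $\gamma'$ depends only on $\gamma$ and that complementation preserves edit distance, which the paper leaves implicit.
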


\begin{proof}
    Let $\overline{G}$ be the complement of the graph $G$, and let $\gamma'\in (\gamma,1/4)$. By Lemma~\ref{lem:sp to disc}, 
    $$\surp(\overline{G})\leq 2n^{1 + \gamma} + \frac{n}{4}\leq n^{1 + \gamma'}$$ when $n$ is sufficiently large. Hence, by Theorem \ref{thm:sp close to Turan}', $\overline{G}$ is $\eps$-close to the vertex-disjoint union of cliques, as long as $n$ is sufficiently large. The result then follows by taking complements.
\end{proof}

Next, we prove that if a complete multipartite graph has small positive discrepancy, then it is close to a Tur\'an graph.

\begin{lemma} \label{lem:from complete multipartite to Turan}
    For every $\delta>0$ there exists $\eps>0$ such that if $n$ is sufficiently large and $H$ is a complete multipartite graph on $n$ vertices with $\disc^+(H)\leq \eps n^2$, then $H$ is $\delta$-close to a Tur\'an graph.
\end{lemma}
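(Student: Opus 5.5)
We argue by contrapositive: assuming $H$ is $\delta$-far from every Tur\'an graph, we exhibit a set $U$ with $e(H[U])-p\binom{|U|}{2}\geq \eps n^2$. Let the parts of $H$ be $V_1,\dots,V_k$ with sizes $a_1\geq a_2\geq\dots\geq a_k$, and write $\alpha_i=a_i/n$, so $\sum\alpha_i=1$ and the density is $p\approx 1-\sum_i\alpha_i^2$. A test set $U$ is described by fractions $x_i\in[0,\alpha_i]$, where $U$ takes $x_in$ vertices of $V_i$. Then, up to $O(n)$ error,
$$e(H[U])-p\binom{|U|}{2}\approx \frac{n^2}{2}\Big[(1-p)\Big(\sum_i x_i\Big)^2-\sum_i x_i^2\Big]=\frac{n^2}{2}\Big[\Big(\sum_j\alpha_j^2\Big)\Big(\sum_i x_i\Big)^2-\sum_i x_i^2\Big].$$
It therefore suffices to show that if the $\alpha_i$ are far from balanced, then this quadratic form $Q(x)$ is at least some $c(\delta)>0$ for a suitable choice of $x$.

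\textbf{Choice of $U$.} We would take $U$ to be a union of whole parts, so that $x_i\in\{0,\alpha_i\}$. Taking all parts gives $Q=0$, so we need a better subset. Taking the single largest part gives $Q=\alpha_1^2(\sum_j\alpha_j^2-1)<0$, which is useless, so $U$ should contain many parts of roughly equal size. The natural choice is $U=V_1\cup\dots\cup V_r$ for a suitable $r$, or the union of all parts whose size is at most some threshold.

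\textbf{Structural case analysis.} If $H$ is $\delta$-far from every Tur\'an graph, then in particular it is far from $T_k(n)$. I expect this to force one of two situations: there are two "groups" of parts of significantly different typical size, both carrying $\Omega(1)$ mass (parts of size $o(n)$ may be grouped together, since a collection of tiny parts behaves like a near-complete graph, and $\eps$-small parts contribute little either way). To handle this cleanly, we use a compactness/continuity argument. Fix $\delta$, and suppose no $\eps$ works. Then there is a sequence of $\alpha$-vectors (ordered decreasingly, lying in $\ell^1$, allowing infinitely many parts, or a "dust" component of vanishing parts) with $\max_x Q(x)\to 0$. Passing to a limit, we obtain a limiting profile $\alpha^*$ together with a dust mass $\beta$ for which the supremum of $Q$ is $0$. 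Here dust contributes to $\sum x_i$ without contributing to $\sum x_i^2$.

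\textbf{Main obstacle.} The key step is to show that $\sup Q=0$ forces the profile to be balanced: $k$ equal parts $1/k$ with no dust, or, in the limit, pure dust, which corresponds to a near-complete graph and is close to $T_n(n)$. We also need the edit-distance continuity of this correspondence. To prove balancedness, let $s=\sum_j\alpha_j^2$ and suppose $\sup Q=0$. Since $Q$ is convex in each coordinate, maxima occur at $x_i\in\{0,\alpha_i\}$ (with dust taken as any amount). Comparing the full set with the full set minus one part $i$ gives the condition
$$s\big((1-\alpha_i)^2-1\big)+\alpha_i^2\leq 0,$$
that is, $\alpha_i(1+s)\leq 2s$, so $\alpha_i\leq 2s/(1+s)$. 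Comparing the full set with the full set plus (actually minus) a small amount $\eta$ of dust gives $-2s\eta\leq 0$, which is fine. Instead, take $U$ to be the dust alone: this gives $s\beta^2>0$ if $\beta>0$. Hence $\beta=0$ unless $s=0$, which is the all-dust case. Finally, take $U$ to be the set of all parts except those of maximal size $\alpha_1$. If the sizes are unequal, the first-order variation from removing the smallest part, $2s\sum\alpha-\alpha_{\min}$, must vanish, and combining this with the variation from removing the largest part gives $\alpha_{\min}=\alpha_{\max}$. Making these first-order inequalities quantitative is what yields $\eps(\delta)$ in terms of a lower bound on how unbalanced the parts are.
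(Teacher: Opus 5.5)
Your reduction to the quadratic form $Q(x)=s(\sum_i x_i)^2-\sum_i x_i^2$ with $s=\sum_j\alpha_j^2$ is correct. However, the step everything else rests on is false: $Q$ is not convex in each coordinate. Since $\partial^2 Q/\partial x_i^2=2(s-1)<0$ (unless $H$ is empty), $Q$ is \emph{concave} in each part-coordinate; only your dust variable enters convexly. So the maximum over $\prod_i[0,\alpha_i]$ is in general not attained at a union of whole parts, and in exactly the cases that matter it is not. For example, $H=K_{n/3,2n/3}$ has density $4/9$, hence is at edit distance at least $n^2/36-1$ from every Tur\'an graph. Yet the four unions of whole parts give $Q=0,-4/81,-16/81,0$, and your necessary condition $\alpha_i\le 2s/(1+s)$ holds ($2/3<5/7$). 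Its positive discrepancy of order $n^2$ only becomes visible when you take part of a part: choosing $n/3$ vertices from each side gives $Q=2/81$, i.e.\ $e(H[U])-p\binom{|U|}{2}\approx n^2/81$. So no argument confined to unions of whole parts, with or without your compactness step, can prove the lemma.

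Your final step gestures at the right idea but is garbled. At $x=\alpha$ one has $\partial Q/\partial x_i=2(s-\alpha_i)$. This is automatically nonnegative for the smallest part (as $s\ge\alpha_{\min}\sum_j\alpha_j=\alpha_{\min}$), so nothing can be extracted there. The informative part is the \emph{largest} one, because $s\le\alpha_1$ with equality iff all parts are equal; exploiting it means removing only a fraction of $V_1$, contrary to your restriction.

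Done properly, this gives a short quantitative proof with no compactness. Deleting an $\eta=(\alpha_1-s)/(1-s)$ fraction of $V_1$ from $V(H)$ yields $Q=(\alpha_1-s)^2/(1-s)$. Hence $\disc^+(H)\le\eps n^2$ forces $\alpha_1-s=\sum_j\alpha_j(\alpha_1-\alpha_j)=O(\sqrt{\eps})$. If $\alpha_1\le\delta$, then $H$ is $\delta$-close to $K_n$. Otherwise all but $O(\eps^{1/4})$ of the mass lies in $r\le 2/\delta$ parts of sizes $(1/r\pm O(\eps^{1/4}))n$, so $H$ is $\delta$-close to $T_r(n)$ once $\eps$ is small.

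The paper's proof also relies on partial parts, but is organised differently. It shows that the union $S$ of parts of size at most $\eps^{1/3}n$ has $|S|<\delta n/10$, since otherwise $S$ itself witnesses large $\disc^+$ (this is your dust test). It then transfers the discrepancy bound to $H[T]$, with $T=V(H)\setminus S$, by a random-completion averaging argument. Finally it tests with $\eps^{1/3}n$ vertices from each of the $r$ remaining parts, which forces the density of $H[T]$ to be at least $1-1/r-\eps^{1/4}$.
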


\begin{proof}
    Let $p$ be the edge density of $H$. If $p\geq 1-\delta$, then $H$ is $\delta$-close to the complete graph and we are done. Thus, we assume that $p<1-\delta$. Let $S$ be the union of parts of size at most $K=\eps^{1/3}n$ in $H$. Assuming that $|S|\geq \delta n/10$, we have 
    \begin{align*}
        \disc^+(H)&\geq e(S)-p\binom{|S|}{2}>e(S)-(1-\delta)\binom{|S|}{2}\\
        &\geq \binom{|S|}{2}-\frac{|S|}{K}\binom{K}{2}-(1-\delta)\binom{|S|}{2}\geq \frac{\delta}{10} |S|^2\\
        &\geq \frac{\delta^3}{1000} n^2\geq \eps n^2,
    \end{align*} where the fourth and last inequalities hold assuming $\eps$  is sufficiently small with respect to $\delta$. Hence, we may assume that $|S|< \delta n/10$. Let $T=V(H)\setminus S$. Note that it is sufficient to prove that $H[T]$ is $(\delta/2)$-close to a Tur\'an graph.

    Next, we prove that $\disc^+(H[T])\leq \disc^+(H)+o(n^2)$. Choose $T'\subseteq T$ such that $$e(T')-\mathbb{E}[e(T_{\textrm{rand}})]=\disc^+(H[T]),$$
    where $T_{\textrm{rand}}$ is a random subset of $T$, chosen from the uniform distribution on the subsets of size $|T'|$. Let $\alpha=|T'|/|T|$. Let $S_{\textrm{rand}}$ be a random subset of $S$ of size $\alpha |S|$. Let $V_{\textrm{rand}}$ be a random subset of $V(H)$ of size $\alpha |V(H)|$. Note that (since $H$ is complete between $S$ and $T$), we have $$\mathbb{E}[e(T'\cup S_{\textrm{rand}})]-\mathbb{E}[e(T_{\textrm{rand}}\cup S_{\textrm{rand}})]=e(T')-\mathbb{E}[e(T_{\textrm{rand}})]=\disc^+(H[T]),$$ which implies that $$\mathbb{E}[e(T'\cup S_{\textrm{rand}})]-\mathbb{E}[e(V_{\textrm{rand}})]=\disc^+(H[T])-o(n^2).$$
    From this, $\disc^+(H[T])\leq \disc^+(H)+o(n^2)$ follows.

    But then, we also have $\disc^+(H[T])\leq \eps n^2+o(n^2)$. Note that $H[T]$ is a complete multipartite graph; let $r$ denote the number of parts. Now $H[T]$ has an $r$-partite subgraph with parts of size $\eps^{1/3}n$ each. This subgraph shows that $$\disc^+(H[T])\geq \frac{1}{2}\left(1-\frac{1}{r}-\rho-o(1)\right)(r\eps^{1/3}n)^2,$$ where $\rho$ is the edge density of $H[T]$. Comparing the lower and upper bound for $\disc^+(H[T])$, we have $\rho\geq 1-1/r-\eps^{1/4}$ for large $n$. For sufficiently small $\eps$, this implies that $H[T]$ is $(\delta/2)$-close to an $r$-partite Tur\'an graph.
\end{proof}

We are now ready to prove Theorem \ref{thm:disc close to Turan}'.

\begin{proof}[Proof of Theorem \ref{thm:disc close to Turan}']
    Let $\eps>0$ (with $\eps<\delta$)  be a constant guaranteed by Lemma \ref{lem:from complete multipartite to Turan} applied with $\delta/2$ in place of $\delta$. By Proposition \ref{prop:disc close to multipartite}, if $n$ is sufficiently large in terms of $\gamma$ and $\eps$ and $G$ is an $n$-vertex graph with $\disc^+(G)\leq n^{1+\gamma}$, then $G$ is $(\eps/2)$-close to a complete multipartite graph $H$. By Lemma \ref{lem:pdisc change}, we have $$\disc^+(H)\leq \disc^+(G)+\frac{\eps}{2} n^2\leq n^{1+\gamma}+\frac{\eps}{2} n^2\leq \eps n^2$$ whenever $n$ is sufficiently large. Then, by Lemma~\ref{lem:from complete multipartite to Turan}, $H$ is $(\delta/2)$-close to a Tur\'an graph. It follows by the triangle inequality that $G$ is $\delta$-close to a Tur\'an graph.
\end{proof}

\section{Positive discrepancy of dense graphs} \label{sec:positive discrepancy}

In this section, we give a proof of Theorem~\ref{thm:positive disc}, which we restate here for convenience. 

\begin{custhm}{\ref{thm:positive disc}}
    Let $\eps > 0$. If $G$ is an $n$-vertex graph with average degree $d$ satisfying $1\leq d\leq (1/2 - \eps)n$, then $$\disc^+(G)\geq \Omega_\eps \left(nd^{1/3}\right).$$ 
    Moreover, if $G$ is $d$-regular, then the bisection width of $G$ is at most 
    $$\frac{dn}{4}-\Omega_{\eps}(nd^{1/3}).$$
\end{custhm}

The statement about the bisection width in Theorem \ref{thm:positive disc} follows from the asserted lower bound on $\disc^+(G)$, using \cite[Lemma 2.6]{raty2026positive}. It remains to prove this lower bound.

First, we prove a slightly extended eigenvalue version of this theorem. Our proof of the next result draws on ideas from~\cite{ihringer2023approximately} and~\cite{raty2026positive}, which establish the corresponding statement for regular graphs.

\begin{lemma} \label{prop:near regular eigenvalue for pdisc}
    Let $\eps > 0$ and let $G$ be an $n$-vertex graph with average degree $d\geq 1$ and maximum degree $\Delta \leq (1/2 - \eps)n$. Let $A$ be the adjacency matrix of $G$. If $d$ is sufficiently large with respect to $\eps$, then for any $q\in [0, 1/2 - \eps]$, the maximum eigenvalue of $A - qJ$ is at least $\Omega_\eps \left(d^{1/3}\right)$. 
\end{lemma}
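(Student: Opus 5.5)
The plan is to write $B=A-qJ$ with eigenvalues $\mu_1\geq\dots\geq\mu_n$, set $\lambda=\mu_1$, and assume for contradiction that $\lambda\leq c\,d^{1/3}$ for a small $c=c(\eps)>0$. I would then play the first three power sums of the $\mu_i$ against each other, in the spirit of the moment arguments of~\cite{ihringer2023approximately} and~\cite{raty2026positive}. The degree bound $\Delta\leq (1/2-\eps)n$ is what replaces regularity in the moment computations. The bound $q\leq 1/2-\eps$ is what keeps $B$ from being close to $-\tfrac12(J-I)$-like matrices with small top eigenvalue.

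\textbf{Step 1 (first two moments).} I would compute $\trace B=-qn$ and
\[
\trace B^2=\sum_{i,j}B_{ij}^2=2e(G)(1-q)^2+(n^2-2e(G))q^2\geq 2e(G)(1-2q)\geq 2\eps\, nd .
\]
Split the spectrum into the positive part $P=\{i:\mu_i>0\}$ and the negative part $N$. Since every $\mu_i\leq\lambda$, we get $\sum_{i\in P}\mu_i\leq n\lambda$ and $\sum_{i\in P}\mu_i^2\leq \lambda\sum_{i\in P}\mu_i\leq n\lambda^2$. This is $o(nd)$ under our assumption, so almost all of $\trace B^2$ sits on $N$, that is, $\sum_{i\in N}\mu_i^2\geq \eps nd$. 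The trace identity then gives
\[
\sum_{i\in N}|\mu_i|=qn+\sum_{i\in P}\mu_i\leq qn+n\lambda.
\]

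\textbf{Step 2 (third moment).} Since $\mu^3\leq\lambda\mu^2$ for every $\mu\leq\lambda$, the positive part satisfies $\sum_{i\in P}\mu_i^3\leq \lambda\, n\lambda^2$. For the negative part I would compute $\trace B^3$ directly:
\[
\trace B^3=6t(G)-3q\sum_i d_i^2+6q^2 n\,e(G)-q^3n^3 ,
\]
where $t(G)$ is the number of triangles. I would lower bound it using $t(G)\geq0$ and $\sum_i d_i^2\leq \Delta\cdot 2e(G)$. The role of $\Delta\leq(1/2-\eps)n$ is to make the term $-3q\sum_i d_i^2$ comparable to $-3q\,nd\,\Delta$ rather than something larger. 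This yields an upper bound of the form $\sum_{i\in N}|\mu_i|^3\leq n^3q^3+O(qn\,d\,\Delta)+n\lambda^3$. The term $q^3n^3$ is matched by a single very negative eigenvalue of size about $qn$ coming from the $-qJ$ part.

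To remove that term, I would first separate off the most negative eigenvalue $\mu_n\approx -qn+O(d)$. I would control it via the Rayleigh quotient at the all-ones vector, $\mathbf{1}^TB\mathbf{1}/n=d-qn$, together with interlacing. Then I would run Hölder, $\bigl(\sum x^2\bigr)^2\leq\sum x\cdot\sum x^3$, on the remaining negative eigenvalues. Once the $qn$ contributions cancel in both the first and third sums, I expect a comparison of the shape $(\eps nd)^2\lesssim (d+n\lambda)(n\lambda^3+d^3)$, or with $d^3$ replaced by the second-order remainder. Solving this for $\lambda$ is what should force $\lambda=\Omega_\eps(d^{1/3})$, with $d$ large used to absorb additive constants.

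\textbf{Main obstacle.} The hard part is exactly this cancellation of the $qn$-scale contributions. The eigenvalue of $B$ near $-qn$ must be isolated precisely enough that its contributions to the first and third power sums cancel against the $-qn$ and $-q^3n^3$ terms, up to errors of order $O(d)$ and $O(nd^2)$. Without regularity, the vector $\mathbf 1$ is not an eigenvector, so I would instead project onto $\mathbf 1^\perp$. I would work with the compression $B'=\Pi B\Pi$, where $\Pi$ is the orthogonal projection onto $\mathbf 1^\perp$, and note that $B'=\Pi A\Pi$ does not depend on $q$. By Cauchy interlacing, $\lambda_1(B)\geq\lambda_2(\Pi A\Pi)$ up to the one removed direction. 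I would then run the moment argument on $\Pi A\Pi$, where $\trace(\Pi A\Pi)=-2e(G)/n$ and the third moment is controlled by $t(G)\geq 0$ and the bound $\Delta\leq(1/2-\eps)n$. I expect the inequalities to close there, but the bookkeeping of the degree-variance terms $\sum_i(d_i-d)^2$ is the step I am least sure of.
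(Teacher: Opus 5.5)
Your Step~1 and the compression to $\Pi A\Pi$ are sound. The gap is the third-moment/H\"older step: it cannot give $d^{1/3}$ in the regime where the lemma is actually needed. The failure already occurs for regular graphs, so it is not a matter of the degree-variance bookkeeping you flag. On $\mathbf{1}^\perp$, the only information $t(G)\ge 0$ supplies is a bound $\sum_i\omega_i^3\ge -d^3$; for regular $G$ one has exactly $\sum_i\omega_i^3=6t(G)-d^3$. The inequality you aim for, $(\eps nd)^2\lesssim (d+n\lambda)(n\lambda^3+d^3)$ with your $\lambda=\mu_1$, is satisfied by $\lambda=Cn/d$ for a suitable constant $C$, since the right-hand side is at least $n\lambda\cdot d^3$. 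So at best it forces $\lambda=\Omega(n/d)$, which is $o(d^{1/3})$ as soon as $d\gg n^{3/4}$. In discrepancy terms this is only the $n^2/d$ bound of Theorem~\ref{thm:three regimes}, and $d\ge n^{3/4}$ is precisely the range in which the paper invokes this lemma. The obstruction is genuine for these power sums. Take about $n$ eigenvalues equal to $\lambda$ and about $n\lambda^2/d$ eigenvalues equal to $-d/\lambda$. Up to lower-order adjustments these have the required first and second power sums ($-d$ and $\Theta(nd)$), and their third power sum $\approx n\lambda^3-nd^2/\lambda$ is at least $-d^3$ once $\lambda\ge n/d$. Moments up to order three, with only $t(G)\ge 0$, cannot exclude a moderate number of negative eigenvalues of size $d/\lambda$.

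The missing ingredient is a direct cap on the negative eigenvalues of the compression in terms of $\mu_1$, which the paper gets from the Schur product theorem. Since $X=\mu_1 I+qJ-A\succeq 0$, also $X\circ X=(1-2q)A+q^2J+(\mu_1^2+2q\mu_1)I\succeq 0$; this uses that $A$ is a $0/1$ matrix with zero diagonal. Test this on a unit eigenvector $u\perp\mathbf{1}$ for the least eigenvalue $\omega_{n-1}$ of $\Pi A\Pi$. This gives $(1-2q)|\omega_{n-1}|\le \mu_1^2+2q\mu_1$. Together with $\mu_1\ge 1-2q\ge 2\eps$ (from a $2\times 2$ principal submatrix), this yields $|\omega_{n-1}|=O_\eps(\mu_1^2)$. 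This, rather than keeping $B$ away from $-\frac12(J-I)$, is where $q\le 1/2-\eps$ really enters. With this cap, your first- and second-moment bookkeeping already closes and no third moment is needed. Write $N$ for the indices of the negative $\omega_i$, and use $\trace(\Pi A\Pi)=-d$ and $\trace((\Pi A\Pi)^2)\ge nd-2\Delta d+d^2\ge 2\eps nd$. Then
\[
2\eps nd-n\mu_1^2\le\sum_{i\in N}\omega_i^2\le |\omega_{n-1}|\sum_{i\in N}|\omega_i|\le O_\eps(\mu_1^2)\,(d+n\mu_1),
\]
which is impossible if $\mu_1\le c\,d^{1/3}$ with $c=c(\eps)$ small and $d$ large. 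The paper packages the second-moment step as $\omega_1|\omega_{n-1}|=\Omega_\eps(d)$. It then combines this with $|\omega_{n-1}|=O_\eps(\mu_1^2)$ and $\omega_1\le\mu_1$.
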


\begin{proof}
 Note that we may assume that both $n$ and $d$ are sufficiently large with respect to $\eps$. Let $\mu_1\geq \dots\geq \mu_n$ denote the eigenvalues of $A - qJ$. We may assume that $\mu_1\leq d^{1/3}$, otherwise we are done. 

Let $\mathbf{e} = (1, \dots, 1)\in\mathbb{R}^n$, and let $P:= I - n^{-1}J$. Then $P$ is the matrix of orthogonal projection onto the subspace $V:= \langle \mathbf{e}\rangle^\perp\leq \mathbb{R}^n$. For a symmetric real $n\times n$ matrix $M$, let $M_P = PMP$ denote its projection onto $V$. 

Note that $\mathbf{e}$ is trivially an eigenvector of $(A - qJ)_P = A_P$ with eigenvalue zero. Let $\omega_1\geq \dots \geq \omega_{n - 1}$ denote the non-trivial eigenvalues of $A_P$ corresponding to eigenvectors in $V$. By Cauchy's interlacing theorem, we have $\mu_1\geq \omega_1\geq \mu_2\geq \dots \geq \omega_{n - 1}\geq \mu_n$.

\begin{claim}
$\mu_1\geq 2\eps$ and $\omega_{n-1}<0$.
\end{claim}

\begin{proof}
 As $G$ is nonempty, the matrix
    $$\begin{pmatrix}
        -q & 1-q \\
        1-q & -q
    \end{pmatrix}$$ is a submatrix of $A-qJ$. The largest eigenvalue of this matrix is $1-2q\geq 2\eps$, so by the Cauchy interlacing theorem, we have $\mu_1\geq 2\eps$.

In order to show that $\omega_{n-1}<0$, it is enough to prove that the trace of $A_P$ is negative. But this is true as $\trace(A_P)=\langle A,P\rangle=-d$.
\end{proof}

\begin{claim} \label{lem:mu_1 lower bound part 1}
    We have $\mu_1\geq \Omega_\eps\left(\sqrt{|\omega_{n - 1}|}\right)$. 
\end{claim}

\begin{proof}
    Let $X:= \mu_1 I + qJ - A$. Then $X\succeq 0$, so by the Schur product theorem, we have $X\circ X\succeq 0$. Note that 
    \begin{align}
        X\circ X &= (1 - 2q)A + q^2J + (\mu_1^2 + 2\mu_1 q)I. \label{eqn:Xsquare}
    \end{align}
    Let $u\in V$ be a unit eigenvector corresponding to the eigenvalue $\omega_{n - 1}$ of $(A - qJ)_P = A_P$. Using that $Pu=u$ and thus $u^TA_Pu=u^TAu$, equation (\ref{eqn:Xsquare}) implies $$u^T(X\circ X)u=(1-2q)u^TA_Pu+(\mu_1^2+2\mu_1 q)u^T u= (1-2q)\omega_{n-1}+\mu_1^2+2\mu_1 q.$$
    On the other hand, since $X\circ X$ is positive semidefinite, we have $u^T(X\circ X)u\geq 0$. Hence,
    \begin{equation}
        \mu_1^2+2\mu_1 q \geq (1 - 2q)|\omega_{n - 1}| \geq \Omega_\eps (|\omega_{n - 1}|). \label{eqn:mu and omega}
    \end{equation}
    Using that $\mu_1\geq 2\eps$ and $q\leq 1$, the left-hand-side is $O_{\eps}(\mu_1^2)$, finishing the proof.
\end{proof}

\begin{claim} \label{lem:mu_1 lower bound part 2}
    We have $\omega_1|\omega_{n - 1}|\geq \Omega_\eps(d)$. 
\end{claim}

\begin{proof}
    For any $\omega\in [\omega_{n - 1}, \omega_1]$, we have $(\omega - \omega_{n - 1})(\omega_1 - \omega)\geq 0$. That is, 
    \begin{equation} \label{rst lem 2-2 ineq}
        \omega^2\leq (\omega_1 + \omega_{n - 1})\omega + \omega_1|\omega_{n - 1}|. 
    \end{equation}
    Summing~(\ref{rst lem 2-2 ineq}) over $\omega = \omega_i$ for $i = 1, 2, \dots, n - 1$, we have
    \begin{equation*}
        \sum_{i = 1}^{n - 1}\omega_i^2\leq (\omega_1 + \omega_{n - 1})\left(\sum_{i = 1}^{n - 1}\omega_i\right) + (n - 1)\omega_1|\omega_{n - 1}|. 
    \end{equation*}
    In other words, 
    \begin{equation*}
        \trace(A_P^2) \leq (\omega_1 + \omega_{n - 1})\trace(A_P) + (n - 1)\omega_1|\omega_{n - 1}|. 
    \end{equation*}
    However, we have $\trace(A_P)=-d$  and 
    \begin{equation*}
        \trace(A_P^2) = \trace(APAP) = \trace(A^2) - 2n^{-1}\trace(A^2J) + n^{-2}\trace(AJAJ)\geq nd - 2\Delta d + d^2. 
    \end{equation*}
    Therefore, 
    \begin{equation*}
        nd - 2\Delta d + d^2\leq -d(\omega_1 + \omega_{n - 1}) + (n - 1)\omega_1|\omega_{n - 1}|. 
    \end{equation*}
    By Claim~\ref{lem:mu_1 lower bound part 1}, $|\omega_{n - 1}|\leq O_{\eps}(\mu_1^2)\leq O_\eps(d^{2/3})$. Therefore, when $d$ is sufficiently large with respect to $\eps$, we have $\omega_1 + \omega_{n-1}\geq -d$. Hence
    \begin{equation*}
        (n - 1)\omega_1|\omega_{n - 1}|\geq nd - 2\Delta d + d^2 + d(\omega_1 + \omega_{n - 1})\geq nd - 2\Delta d\geq 2\eps nd, 
    \end{equation*}
    which implies the desired inequality. 
\end{proof}
  By Claims~\ref{lem:mu_1 lower bound part 1} and~\ref{lem:mu_1 lower bound part 2}, we have $\mu_1^3\geq \omega_1 \mu_1^2\geq \Omega_\eps(\omega_1 |\omega_{n - 1}|)\geq \Omega_\eps(d)$, showing the required inequality $\mu_1\geq \Omega_{\eps}(d^{1/3})$. 
\end{proof}

Next, we prove Theorem~\ref{thm:positive disc} in the special case where the maximum degree $\Delta$ of $G$ satisfies $\Delta\leq (1 + \eps/10)d$.  Our proof follows an argument of R\"aty, Sudakov and Tomon~\cite{raty2026positive}. However, we use our main technical result, Theorem~\ref{thm:matrix bridge}, to deduce the desired lower bound on the positive discrepancy from the corresponding statement on the relevant eigenvalue. 

\begin{lemma} \label{prop:near regular positive disc}
    Let $\eps > 0$. If $G$ is an $n$-vertex graph with average degree $d$ and maximum degree $\Delta$, where $d\leq (1/2 - \eps)n$ and $\Delta \leq (1 + \eps/10)d$, then $\disc^+(G)\geq \Omega_\eps \left(nd^{1/3}\right)$. 
\end{lemma}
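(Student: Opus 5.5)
By Lemma~\ref{lem:pdisc from vector}, it suffices to show that $\disc^+(A-p(J-I))=\Omega_\eps(nd^{1/3})$, where $A$ is the adjacency matrix of $G$ and $p=d/(n-1)$ is its edge density. Since $p\leq (1/2-\eps)n/(n-1)\leq 1/2-\eps/2$ for $n$ large, and the summand $pI$ only shifts every eigenvalue by $+p\geq 0$, it is enough to bound the eigenvalues of principal submatrices of $A-pJ$. I will apply Theorem~\ref{thm:matrix bridge} to $M=A-p(J-I)$ with $n'=\lceil n/2\rceil$, say. This requires a lower bound of order $d^{1/3}$ on $\lambda_1(M[S\times S])$ for every $S$ with $|S|\geq n/2$. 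Theorem~\ref{thm:matrix bridge} then gives $\disc^+(M)\geq \Omega_\eps(d^{1/3})\cdot n/2$, as required. If $d$ is bounded in terms of $\eps$, the claim $\Omega(n)$ already follows from the Bollob\'as--Scott bound $\disc^+(G)=\Omega(n)$. Alternatively, the $2\times 2$ submatrix argument from the first claim in the proof of Lemma~\ref{prop:near regular eigenvalue for pdisc} applies to any $S$ containing an edge, and a largest induced empty $S$ is handled separately below. So assume $d$ is large.

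Fix $S$ with $|S|=s\geq n/2$ and let $H=G[S]$, with average degree $d_S$ and maximum degree $\Delta_S\leq \Delta\leq (1+\eps/10)d\leq (1/2-\eps/2)n$. In the main case $d_S\geq c d$ for a small constant $c=c(\eps)$, I invoke Lemma~\ref{prop:near regular eigenvalue for pdisc} for $H$ with $q=p$. This needs $\Delta_S\leq (1/2-\eps')s$. That holds only when $s$ is close to $n$, since for $s=n/2$ the bound $\Delta_S\leq n/2-\eps n$ is roughly $s$, which is too weak. I therefore choose $n'=n-\eta n$ with $\eta=\eta(\eps)$ small. Then $s\geq (1-\eta)n$ gives $\Delta_S\leq (1/2-\eps/2)n\leq (1/2-\eps/4)s$ and $p\leq 1/2-\eps/4$, so Lemma~\ref{prop:near regular eigenvalue for pdisc} yields $\lambda_1(A_S-pJ)\geq \Omega_\eps(d_S^{1/3})=\Omega_\eps(d^{1/3})$. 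Theorem~\ref{thm:matrix bridge} then gives $\disc^+\geq \eta n\cdot\Omega(d^{1/3})$, which is still enough.

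It remains to check $d_S\geq cd$ whenever $|S^c|\leq \eta n$. This is where near-regularity enters. The deleted vertices meet at most $\eta n\Delta\leq 2\eta nd$ edges, while $e(G)=nd/2$. Hence $e(H)\geq nd/2-2\eta nd\geq nd/4$ for $\eta\leq 1/8$, and so $d_S\geq d/2$. This also settles the small-$d$ concern, because $H$ is nonempty.

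The main obstacle is making the parameters in Lemma~\ref{prop:near regular eigenvalue for pdisc} uniform. The statement requires $d_S$ large in terms of the new $\eps'=\eps/4$ and yields constants depending on $\eps'$; this is fine since all constants depend only on $\eps$. The genuinely delicate point is the maximum-degree condition for the subgraph, which forces $n'$ to be $(1-\eta)n$ rather than $n/2$. For the last step, we translate back: Lemma~\ref{lem:pdisc from vector} gives $\disc^+(G)\geq \frac18\disc^+(M)=\Omega_\eps(nd^{1/3})$.
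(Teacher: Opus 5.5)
Your proof is correct and follows the paper's argument: you reduce via Lemma~\ref{lem:pdisc from vector} to $M=A-p(J-I)$, apply Theorem~\ref{thm:matrix bridge} with $n'=(1-\eta)n$ (the paper takes $\eta=\eps/100$), and use near-regularity twice. It keeps $d_S=\Omega(d)$, and it gives $\Delta_S\le(1/2-\eps/4)|S|$ so that Lemma~\ref{prop:near regular eigenvalue for pdisc} applies with $q=p$.

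The only deviation is how you dispose of bounded $d$: you use Bollob\'as--Scott, or more self-containedly the $2\times 2$ interlacing bound $\lambda_1(M_S)\ge 1-p$ (valid because every such $S$ spans an edge). The paper instead invokes Theorem~\ref{thm:three regimes} for $d\le n^{3/4}$. Your version shows that the lemma only needs $d$ large in terms of $\eps$.
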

\begin{proof}
    If $d\leq n^{3/4}$, Lemma~\ref{prop:near regular positive disc} follows from Theorem~\ref{thm:three regimes}. Therefore, we may assume that $d\geq n^{3/4}$. In particular, we may assume that $d$ is sufficiently large with respect to $\eps$. 
    
    Let $p$ denote the density of $G$. By Lemma~\ref{lem:pdisc from vector}, it suffices to show that $\disc^+(A - p(J - I))\geq \Omega_\eps \left(nd^{1/3}\right)$. 
    Applying Theorem~\ref{thm:matrix bridge} to $M = A - p(J - I)$ with $n' = (1 - \eps/100)n$, it suffices to show that for any subset $S\subseteq [n]$ of size at least $n'$, the maximum eigenvalue of $M_S$ is at least $\Omega_\eps(d^{1/3})$. Let $H = G[S]$ have average degree $d'$ and maximum degree $\Delta'$. Since $e(H)\geq e(G) - (n - n')\Delta$, we have $d'\geq (1 - \eps/10)d$. In particular, $d'$ is sufficiently large with respect to $\eps$. We also have $$\Delta'\leq \Delta\leq \left(1 + \frac{\eps}{10}\right)d\leq \left(1 + \frac{\eps}{10}\right)\left(\frac{1}{2} - \eps \right)n\leq \left(\frac{1}{2} - \frac{\eps}{2}\right)v(H).$$ Therefore, applying Lemma~\ref{prop:near regular eigenvalue for pdisc} to $H$ with $\eps/2$ in place of $\eps$ shows that the maximum eigenvalue of $(A - pJ)_S$ is at least $\Omega_\eps (d'^{1/3})$. Therefore, the maximum eigenvalue of $M_S$ is at least $\Omega_\eps (d'^{1/3}) + p\geq \Omega_\eps(d^{1/3})$, as required. 
\end{proof}

Finally, we are ready to prove the main theorem of this section.

\begin{proof}[Proof of Theorem~\ref{thm:positive disc}]
    Without loss of generality, we assume that $\eps < 0.01$. The desired estimate follows from Theorem~\ref{thm:three regimes} when $d\leq n^{3/4}$. Therefore, we may assume that $d\geq n^{3/4}$, in particular, that $d$ is sufficiently large with respect to $\eps$. Moreover, we assume that $\disc^+(G)\leq nd^{1/3}$, otherwise, we are done. 
    
    Let $T$ be the set of vertices in $G$ with degree at least $(1 + \eps/40)d$, and let $H$ be the subgraph of $G$ induced on $V(G)\setminus T$. By Lemma \ref{lem:if weight on high degree}, there exist $c_1,c_2>0$ (depending only on $\eps$) such that
    \begin{equation*}
        \disc^+(G) \geq c_1(e(T) + e(T, T^c))-c_2n = c_1(e(G) - e(H))-c_2n\geq c_1d|T|/2-c_2n. 
    \end{equation*}
     This implies $|T|\leq O_{\eps}(nd^{-2/3})$ and
     $$e(H)=e(G)-e(T)-e(T,T^c)\geq e(G)-O_{\eps}(nd^{1/3}).$$
    Let $d'$ and $\Delta'$ be the average and maximum degree of $H$, respectively, and let $n' = v(H)$.
    By construction, $\Delta'\leq (1 + \eps/40)d$. To estimate $d'$, we note that 
    $$e(G) - O_\eps(nd^{1/3})\leq e(H)\leq e(G)\quad\text{and}\quad(1 - O_\eps(d^{-2/3}))n\leq n'\leq n.$$
    Therefore, when $d$ is sufficiently large, we have 
    $$d'=\frac{2e(H)}{n'}\leq \frac{2e(G)}{n(1-O_{\eps}(d^{-2/3}))}\leq d\left(1+O_{\eps}(d^{-2/3})\right).$$
    Similarly, $d'\geq (1 - O_\eps(d^{-2/3}))d$. Therefore, $\Delta'\leq (1 + \eps/20)d'$ when $d$ is sufficiently large. Applying Lemma~\ref{prop:near regular positive disc} with $\eps/2$ in place of $\eps$, we deduce that $\disc^+(H)\geq \Omega_\eps(n'
    d'^{1/3})\geq \Omega_\eps(nd^{1/3})$. Let $H'$ be the same graph as $H$ but with $|V(G)|-|V(H)|$ isolated vertices added. Then, by Lemma \ref{lem:pdisc change}, we have
    $$\disc^+(H)\leq \disc^+(H')\leq \disc^+(G)+(e(G)-e(H'))=\disc^+(G)+(e(G)-e(H)).$$
    Therefore, 
    \begin{equation*}
        \Omega_\eps(nd^{1/3})\leq \disc^+(H)\leq \disc^+(G) + (e(G) - e(H))\leq O_\eps(\disc^+(G) + n). 
    \end{equation*}
    Hence $\disc^+(G)\geq \Omega_\eps(nd^{1/3})$ as needed. 
\end{proof}

\section{MaxCut of $K_t$-free graphs} \label{sec:ktfree}

In this section, we prove Theorem \ref{thm:Kt free surplus bound}. The proof of this theorem has two ingredients: our new ``bridge'' between smallest eigenvalue and surplus, and the analogue of Theorem \ref{thm:Kt free surplus bound} proved in \cite{jin2025small} for the smallest eigenvalue.
We start with the former.

\begin{proposition} \label{thm:Kt free bridge result}
    Let $1\geq \alpha > \beta$ be positive real numbers, let $t\geq 3$ be a positive integer. Assume that for every $m\geq m_0(t,\alpha)$, every $K_t$-free $n$-vertex graph $G$ with $m$ edges satisfies $|\lambda_n|\geq m^\alpha/n$. Then for every $m\geq m_0(t,\alpha,\beta)$, every $K_t$-free graph $G$ with $m$ edges satisfies $\surp (G)\geq m^{\beta}$. 
\end{proposition}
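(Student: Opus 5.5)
The plan is to combine the regularisation Lemma~\ref{lem:regularization} with Theorem~\ref{thm:eigenvalue to surplus}. Bounded maximum degree is what makes the bridge theorem usable: deleting a small linear fraction of the vertices then cannot destroy more than a constant fraction of the edges.

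\textbf{Setting up the exponents.} We may assume $G$ has no isolated vertices, since deleting them changes neither $m$ nor $\surp(G)$. Then $n\leq 2m$, and also $d\leq n$. Set $\gamma=(\alpha+\beta)/2$ and fix a small $\eta>0$, to be chosen depending on $\alpha,\beta$. Apply Lemma~\ref{lem:regularization} with exponents $\gamma-\eta<\gamma+\eta$; their sum is at most $2$. This gives two cases.

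\textbf{Case 1: $\surp(G)\geq c\,n^{\gamma-\eta}d^{\gamma+\eta}$.} Write
$$n^{\gamma-\eta}d^{\gamma+\eta}\geq (nd)^{\gamma}n^{-2\eta}\geq \Omega(m^{\gamma})(2m)^{-2\eta}.$$
This is at least $m^{\beta}$ for large $m$, provided $2\eta<\gamma-\beta$.

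\textbf{Case 2: an induced subgraph $\tilde G$.} We get $\tilde G$ with $\tilde n$ vertices, average degree $\tilde d$ and $\Delta(\tilde G)\leq C\tilde d$, satisfying $\tilde n^{\gamma-\eta}\tilde d^{\gamma+\eta}\geq c\,n^{\gamma-\eta}d^{\gamma+\eta}$.

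\emph{Size of $\tilde G$.} Since $\tilde d\leq \tilde n$, the left-hand side is at most $(\tilde n\tilde d)^{\gamma}$. Hence $\tilde m=\tilde n\tilde d/2$ satisfies $\tilde m^{\gamma}\gtrsim m^{\gamma-2\eta}$, i.e.\ $\tilde m\gtrsim m^{1-2\eta/\gamma}$. In particular $\tilde m\to\infty$ with $m$, so the hypothesis applies to all the graphs below once $m$ is large.

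\emph{Reducing to $\tilde G$.} The surplus does not increase when passing to an induced subgraph. Given a cut of $\tilde G$, place the remaining vertices randomly; the expected surplus is the surplus of the cut of $\tilde G$. So it suffices to show $\surp(\tilde G)\geq m^{\beta}$.

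\emph{Applying the bridge theorem.} Apply Theorem~\ref{thm:eigenvalue to surplus} to $\tilde G$ with $n'=\lceil(1-\tfrac{1}{4C})\tilde n\rceil$. Let $H$ be any induced subgraph on at least $n'$ vertices. It is obtained by deleting at most $\tilde n/(4C)$ vertices, each of degree at most $C\tilde d$. So at most $\tilde n\tilde d/4=\tilde m/2$ edges are lost, and $e(H)\geq \tilde m/2$. Also $H$ is $K_t$-free with at most $\tilde n$ vertices. By the assumed eigenvalue bound,
$$|\lambda_{\min}(H)|\geq (\tilde m/2)^{\alpha}/v(H)\geq (\tilde m/2)^{\alpha}/\tilde n.$$
Theorem~\ref{thm:eigenvalue to surplus} then gives
$$\surp(\tilde G)\geq \frac{\tilde n}{4C}\cdot\frac{(\tilde m/2)^{\alpha}}{4\tilde n}=\Omega(\tilde m^{\alpha}).$$

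\emph{Conclusion.} Combining with the size bound, $\surp(\tilde G)=\Omega\big(m^{\alpha(1-2\eta/\gamma)}\big)$. This exceeds $m^{\beta}$ for large $m$ once $\eta$ is small enough that $\alpha(1-2\eta/\gamma)>\beta$. This is possible since $\alpha>\beta$.

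\textbf{Main difficulty.} The delicate point is the exponent bookkeeping. The regularisation lemma requires strictly unequal exponents, so the bounds lose factors $n^{\pm\eta}$. These are absorbed by $n\leq 2m$ and $\tilde d\leq\tilde n$, together with the choice of the intermediate exponent $\gamma$ strictly between $\beta$ and $\alpha$. Otherwise the argument is a direct plug-in of Theorem~\ref{thm:eigenvalue to surplus}, made possible by the bounded-degree guarantee.
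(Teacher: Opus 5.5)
Your proof is correct and follows essentially the same route as the paper: you regularise via Lemma~\ref{lem:regularization}, then apply Theorem~\ref{thm:eigenvalue to surplus} to $\tilde G$ with $n'=(1-\Theta(1/C))\tilde n$, using the maximum-degree bound to guarantee $e(H)\geq e(\tilde G)/2$ for every large induced subgraph $H$. The only difference is in the exponent bookkeeping. The paper applies the lemma with exponents $(\alpha-\eps/2,\alpha)$, where $\eps=\alpha-\beta$, so that $e(\tilde G)^\alpha\gtrsim \tilde n^{\alpha-\eps/2}\tilde d^{\alpha}\gtrsim n^{\alpha-\eps/2}d^{\alpha}$ chains directly, whereas your symmetric choice around $\gamma$ needs the extra (valid) step $\tilde m\gtrsim m^{1-2\eta/\gamma}$.
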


\begin{proof}
    Let $G$ be an $n$-vertex $K_t$-free graph with $m$ edges. We may assume without loss of generality that $G$ has no isolated vertices. In particular, the average degree $d$ of $G$ satisfies $d\geq 1$. Let $\eps = \alpha - \beta > 0$. By Lemma~\ref{lem:regularization}, there exist constants $c, C$ such that either $\surp(G)\geq cn^{\alpha - \eps/2} d^{\alpha}$ or $G$ has an induced subgraph $\tilde{G}$ with $\tilde{n}$ vertices and average degree $\tilde{d}$, where $\tilde{n}^{\alpha - \eps/2} \tilde{d}^{\alpha} \geq cn^{\alpha - \eps/2}d^{\alpha}$ and $\Delta(\tilde{G}) \leq C\tilde{d}$. 

    For sufficiently large $m$, we have $cn^{\alpha - \eps/2} d^{\alpha} > m^{\beta}$. Therefore, if $\surp(G) < m^{\beta}$, then $G$ must have an induced subgraph $\tilde{G}$ with $\tilde{n}$ vertices and average degree $\tilde{d}$, where $\tilde{n}^{\alpha - \eps/2} \tilde{d}^{\alpha} \geq cn^{\alpha - \eps/2}d^{\alpha}$ and $\tilde{\Delta}:=\Delta(\tilde{G}) \leq C\tilde{d}$. Note that $e(\tilde{G})\geq \Omega(m^{\Omega(1)})$ is sufficiently large whenever $m$ is. 
    
    Let $n' = \left(1 - \frac{1}{10C}\right)\tilde{n}$. Any induced subgraph $H$ of $\tilde{G}$ with at least $n'$ vertices has at least $$e(\tilde{G}) - \tilde{\Delta}(\tilde{n} - v(H)) \geq \frac{1}{2}\tilde{n}\tilde{d} - \frac{1}{10C}\tilde{\Delta}\tilde{n}\geq \frac{1}{2}e(\tilde{G})$$ edges. By our assumption, the absolute value of the least eigenvalue of $H$ is at least
    \begin{equation*}
        e(H)^\alpha/v(H)\geq \Omega(e(\tilde{G})^\alpha/\tilde{n}). 
    \end{equation*}
    Therefore, by Theorem~\ref{thm:eigenvalue to surplus}, $\surp(\tilde{G})\geq \Omega((\tilde{n} - \lceil n'\rceil)e(\tilde{G})^\alpha/\tilde{n})\geq \Omega(e(\tilde{G})^\alpha)$. Hence 
    \begin{equation*}
        \surp(G)\geq \surp(\tilde{G})\geq \Omega(e(\tilde{G})^\alpha)\geq \Omega(\tilde{n}^{\alpha -\eps/2}\tilde{d}^\alpha)\geq \Omega(n^{\alpha - \eps/2} d^\alpha)\geq \Omega(m^{\alpha - \eps/2}). 
    \end{equation*}
    Hence for sufficiently large $m$, we have $\surp(G)\geq m^{\alpha - \eps} = m^{\beta}$. 
\end{proof}

We now state the key input from \cite{jin2025small}.

\begin{theorem}[Jin--Milojevi\'c--Tomon--Zhang {\cite[Theorem~1.2]{jin2025small}}] \label{thm:Kt free eigenvalue bound}
    For every $\gamma\in (0, 1/10)$, the following holds whenever $d$ is sufficiently large with respect to $\gamma$. Let $G$ be a graph of average degree $d$. If $|\lambda_n|\leq d^\gamma$, then $G$ contains a clique of size at least $d^{1 - 4\gamma}$. 
\end{theorem}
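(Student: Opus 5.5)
The plan is to work with a Gram representation. Set $\lambda=d^\gamma$. The hypothesis $|\lambda_n|\le\lambda$ says exactly that $A+\lambda I\succeq 0$. So there are vectors $\mathbf v_1,\dots,\mathbf v_n$ with $\|\mathbf v_i\|^2=\lambda$, $\langle\mathbf v_i,\mathbf v_j\rangle=1$ for $ij\in E(G)$, and $\langle\mathbf v_i,\mathbf v_j\rangle=0$ otherwise. A clique is then a family of these vectors with all pairwise inner products equal to $1$. By Cauchy interlacing, every induced subgraph of $G$ also has least eigenvalue at least $-\lambda$, so we may pass freely to induced subgraphs. The strategy is a density-increment argument inside iterated neighbourhoods, driven by Cauchy--Schwarz on these vectors.

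\textbf{Step 1 (one increment).} First pass to a subgraph of minimum degree at least $d/2$. Take a vertex $u$ and its neighbourhood $N=N(u)$, and put $\mathbf s=\sum_{j\in N}\mathbf v_j$. Then $\langle\mathbf s,\mathbf v_u\rangle=|N|$ and $\|\mathbf s\|^2=\lambda|N|+2e(N)$, so Cauchy--Schwarz gives
\[
|N|^2\le\lambda\bigl(\lambda|N|+2e(N)\bigr).
\]
Hence $G[N]$ has density at least roughly $1/\lambda$ as soon as $|N|\gg\lambda^2$. More generally, for any set $U$ with density $\rho$, apply the same computation to a vertex of $U$ whose degree into $U$ is at least average, combined with a weighted version (weights $\langle\mathbf v_j,\mathbf s_U\rangle$). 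The aim is a set $U'\subseteq U$ with $|U'|\ge\rho|U|/\mathrm{poly}(\lambda)$ and density at least $\rho+\Omega(1/\lambda)$, or better $1-(1-\rho)(1-1/\lambda)$. The intuition is that projecting away the common direction $\mathbf v_u/\sqrt\lambda$ replaces the Gram matrix by one with diagonal $\lambda-1/\lambda$ and off-diagonal entries reduced by $1/\lambda$, which is a renormalised version of the same problem.

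\textbf{Step 2 (iteration).} Iterate Step 1 about $\lambda\log\lambda$ times, until the density exceeds $1-1/\lambda^{2}$. The crucial point is to track the size loss: each step must cost only a factor $1-O(1/\lambda)$ in the relative size, or a multiplicative factor $\lambda^{-O(1)}$ in only $O(\log\lambda)$ of the steps. The total size is then still $d\cdot d^{-O(\gamma)}$, and the exponent bookkeeping should give $d^{1-4\gamma}$.

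\textbf{Step 3 (from near-complete to a clique).} Let $W$ be the final set, of size $d^{1-O(\gamma)}$ and density $1-o(1/\lambda)$. For $\mathbf s=\sum_{j\in W}\mathbf v_j$ we have $\|\mathbf s\|^2\approx|W|^2$, while each $\mathbf v_j$ has norm only $\sqrt\lambda$. This forces almost all $\mathbf v_j$ to be close to $\mathbf s/|W|$. Each non-edge inside $W$ contributes a deficit of $1$, and positive semidefiniteness limits how these deficits can be arranged; so deleting the endpoints of the few non-edges, which a greedy or Turán-type argument should achieve, leaves a clique of size $|W|/2$.

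The main obstacle is Step 2. The naive version of Step 1 loses a factor $1/\lambda$ in size at every step, and with $\lambda\log\lambda$ steps this destroys everything. The first thing I would try is a smoothed increment: instead of restricting to a single neighbourhood, restrict to the set of $j$ with $\langle\mathbf v_j,\mathbf s_U\rangle$ above average. Equivalently, work with the renormalised Gram matrix after projecting out $\mathbf s_U$, and show via a second-moment (trace of the square) computation that this set keeps a $1-O(1/\lambda)$ fraction of $U$ while the density rises by $\Omega((1-\rho)/\lambda)$.
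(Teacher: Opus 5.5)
The paper does not prove this statement; it is quoted from \cite{jin2025small} and used only as a black box in Corollary~\ref{cor:Kt free eigenvalue bound}. So your outline has to stand on its own, and it does not. Everything rests on the increment lemma of Steps~1--2, which you state as an aim and never prove, and which is false in the form you propose to try. Your Cauchy--Schwarz computation, applied with $\mathbf x=\sum_{u\in K}\mathbf v_u$ for a $k$-clique $K$, shows that the common neighbourhood $N_K$ has density roughly at least $k/(\lambda+k-1)$. Greedy clique-building therefore only guarantees $|N_K|\gtrsim d/\binom{\lambda+k-1}{k}$. This is already $d\,2^{-\Omega(\lambda)}$ at $k=\lambda$, when the density is still only about $1/2$, and for $\lambda=d^\gamma$ it is worthless.

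The smoothed increment cannot repair this. Let $u$ be the apex of $K_1\vee\lambda K_s$ (a vertex joined to $\lambda$ disjoint copies of $K_s$). Its least eigenvalue is the negative root of $\mu^2-(s-1)\mu-\lambda s$, which exceeds $-\lambda$. Take $U=N(u)$, a disjoint union of $\lambda$ cliques of size $s$. Then:
\begin{itemize}
\item $\rho\approx 1/\lambda$;
\item every $j\in U$ has $\langle\mathbf v_j,\mathbf s_U\rangle=\lambda+s-1$, so ``above average'' selects nothing;
\item every $U'\subseteq U$ with $|U'|\ge(1-C/\lambda)|U|$ has density at most $(s-1)/((\lambda-C)s-1)\approx 1/(\lambda-C)$. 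This is a gain of $O(C/\lambda^2)$, not $\Omega((1-\rho)/\lambda)$.
\end{itemize}
So density cannot be pushed up at cost $1-O(1/\lambda)$ per step. Some global mechanism controlling the size loss is needed; that is where all the difficulty of the theorem lies, and the proposal contains none. In particular, the exponent $4\gamma$ is never derived.

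Step~3 is also wrong. Since $\|\mathbf v_j\|^2=\lambda$ while $\|\mathbf s/|W|\|^2\approx\rho\approx 1$, one has $\sum_{j\in W}\|\mathbf v_j-\mathbf s/|W|\|^2=(|W|-1)(\lambda-\rho)$. The vectors are far from their mean even when $W$ is a genuine clique, so this carries no information.

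The conclusion itself also fails. For integer $\lambda$, take the complete multipartite graph with $|W|/\lambda$ parts of size $\lambda$. It has least eigenvalue exactly $-\lambda$, and density $1-(\lambda-1)/(|W|-1)\ge 1-1/\lambda^2$ once $|W|\ge\lambda^3$. Yet its clique number is only $|W|/\lambda$, and its non-edges cover every vertex, so deleting their endpoints leaves nothing. More generally, density $1-1/\lambda^2$ permits $\Theta(|W|^2/\lambda^2)$ non-edges.

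What positive semidefiniteness actually gives on $W$ is $\overline{A}_W\preceq J+(\lambda-1)I$ for the complement, i.e.\ the complement has second eigenvalue at most $\lambda-1$. Extracting an independent set of size $|W|/\mathrm{poly}(\lambda)$ from this needs a genuine argument, not greedy deletion.
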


\begin{corollary} \label{cor:Kt free eigenvalue bound}
    Let $\alpha < 0.55$, let $t$ be a positive integer and let $m$ be sufficiently large with respect to $t$ and $\alpha$. Let $G$ be a $K_t$-free $n$-vertex graph with $m$ edges. Then $|\lambda_n|\geq m^\alpha/n$.
\end{corollary}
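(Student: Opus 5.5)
The plan is to apply Theorem~\ref{thm:Kt free eigenvalue bound} directly with a suitable $\gamma$, and to handle separately the case where the average degree is too small for that theorem to apply. Let $d=2m/n$ be the average degree of $G$. Since $G$ is $K_t$-free, it has no clique of size $t$. If $\alpha\le 1/2$, fix any $\gamma\in(0,1/10)$. Otherwise $1/2<\alpha<0.55$, and we fix $\gamma$ with $2\alpha-1<\gamma<1/10$; this is possible exactly because $\alpha<0.55$. In both cases
\[
\alpha<\frac{1+\gamma}{2},
\]
and this inequality is where the threshold $0.55$ comes from. Let $D_0=D_0(\gamma,t)$ be large enough that Theorem~\ref{thm:Kt free eigenvalue bound} applies for $d\ge D_0$ and also $D_0^{1-4\gamma}\ge t$.

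\emph{Case 1: $d\ge D_0$.} Suppose $|\lambda_n|\le d^\gamma$. Then Theorem~\ref{thm:Kt free eigenvalue bound} gives a clique of size at least $d^{1-4\gamma}\ge t$, a contradiction. Hence $|\lambda_n|>d^\gamma=(2m/n)^\gamma$.

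It remains to show $(2m/n)^\gamma\ge m^\alpha/n$, which is equivalent to $2^\gamma n^{1-\gamma}\ge m^{\alpha-\gamma}$. Since $m\le n^2/2$, we have $n\ge \sqrt{2m}\ge m^{1/2}$, so $n^{1-\gamma}\ge m^{(1-\gamma)/2}$. Because $(1-\gamma)/2>\alpha-\gamma$, this gives
\[
n^{1-\gamma}\ge m^{(1-\gamma)/2}\ge m^{\alpha-\gamma},
\]
which is the required bound.

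\emph{Case 2: $d<D_0$.} Here we only use the trivial bound $|\lambda_n|\ge 1$, valid for any graph with at least one edge. This follows from Cauchy interlacing, since the adjacency matrix of a single edge has eigenvalue $-1$. It then suffices to check $m^\alpha/n\le 1$. We have $m<D_0n/2$, so $m^\alpha/n\le (D_0/2)^\alpha n^{\alpha-1}$. Since $\alpha<1$ and $n\ge\sqrt{2m}$ tends to infinity as $m$ does, this is at most $1$ once $m$ is sufficiently large in terms of $t$ and $\alpha$.

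I expect no serious obstacle. The only delicate point is choosing $\gamma$ so that the exponent inequality $(1-\gamma)/2>\alpha-\gamma$ holds while still satisfying $\gamma<1/10$, which is precisely what limits $\alpha$ to below $0.55$. The rest is making sure that the ``sufficiently large'' requirements (large $d$ in Case 1, large $n$ in Case 2) are covered by the case split on $d$ together with $n\ge\sqrt{2m}$.
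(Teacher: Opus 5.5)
Your proof is correct and follows essentially the paper's route. You apply Theorem~\ref{thm:Kt free eigenvalue bound} with $\gamma$ at or just above $2\alpha-1$ to contradict $K_t$-freeness, compare $d^\gamma$ with $m^\alpha/n$ via $d\le n$ (equivalently $n\ge\sqrt{2m}$), and handle small average degree with $|\lambda_n|\ge 1$. The only cosmetic difference is that the paper first reduces to $m^\alpha\ge n$, which forces $d\ge 2m^{1-\alpha}$ to be large, and takes $\gamma=2\alpha-1$ exactly; you instead split on $d\ge D_0$ and take $\gamma$ strictly larger.
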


\begin{proof}
    Without loss of generality, assume that $\alpha > 1/2$ and $m > 0$. Since $|\lambda_n|\geq 1$, we may assume that $m^\alpha\geq n$. In particular, $d=2m/n\geq 2m^{1-\alpha}$ is sufficiently large. 
    
    We apply Theorem~\ref{thm:Kt free eigenvalue bound} to $\gamma = 2\alpha - 1$. If $|\lambda_n| < m^\alpha/n\leq n^{\alpha - 1}d^\alpha\leq d^{2\alpha - 1}$, then by Theorem~\ref{thm:Kt free eigenvalue bound}, $G$ must contain a clique of size at least $d^{1 - 4\gamma} > t$ when $d$ is sufficiently large, a contradiction. 
\end{proof}

Combining Proposition~\ref{thm:Kt free bridge result} and Corollary~\ref{cor:Kt free eigenvalue bound}, Theorem \ref{thm:Kt free surplus bound} follows immediately. 

\section{Concluding remarks}

Our proof of Theorem~\ref{thm:matrix bridge} is algorithmic. 
Let $n'\leq n$ be positive integers, let $M$ be an $n\times n$ real symmetric matrix and let $\lambda>0$ be a real number. After executing polynomially many (real-arithmetic) operations in $n$, we find either a vector $v\in [-1, 1]^n$ such that
\begin{equation*}
    v^TMv\geq (n - n')\lambda, 
\end{equation*}
or a subset $S\subseteq [n]$ of at least $n'$ elements for which the maximum eigenvalue of $M_{S}$ is less than $\lambda$. Indeed, start the algorithm with the zero vector $v^{(0)}=0$. At any stage with a vector $v^{(i)}$ satisfying $(v^{(i)})^TMv^{(i)}\geq \lambda\|v^{(i)}\|^2$, we proceed as follows. If the set $S_i=\{j\in [n]: v^{(i)}_j\neq \pm 1\}$ has the property that $M_{S_i}$ has maximum eigenvalue less than $\lambda$, we certify this with polynomially many operations. Otherwise, we find a non-zero vector $w$ supported on $S_i$ such that $w^T M w\geq \lambda\|w\|_2^2$, and we can add a scalar multiple of $w$ to $v^{(i)}$ to obtain a more saturated vector $v^{(i+1)}$ satisfying $(v^{(i+1)})^TMv^{(i+1)}\geq \lambda\|v^{(i+1)}\|^2$.

In particular, in the case of the negative adjacency matrix of graphs, with polynomially many operations in~$n$, we can find either a cut of surplus at least $(n - n')\lambda/4$ or an induced subgraph on at least $n'$ vertices with least eigenvalue $\mu$ such that $|\mu|< \lambda$.

\medskip

\paragraph{AI declaration.} The mathematical content of the paper is due to the authors. ChatGPT Pro was used to polish an initial draft.

\bibliographystyle{abbrv}
\bibliography{mybib}

\end{document}